\newcommand{\num}{\text{Num}}
\newcommand{\numC}{\text{\emph{Num}}}
\newcommand{\AAA}{a_{\max}}
\newcommand{\T}{\mathcal{T}}
\newtheorem{Theorem}{Theorem}
\newtheorem*{Theorem*}{Theorem}
\newtheorem{lemma}{Lemma}
\newtheorem{proposition}{Proposition}
\theoremstyle{definition}
\newtheorem{definition}{Definition}
\theoremstyle{remark}
\begin{document}
\title{Limiting curves for   polynomial adic systems.\footnote{Supported by the RFBR (grant 14-01-00373)}}

\author{A.~R.~Minabutdinov\thanks{National Research University Higher School of Economics, Department of Applied Mathematics and Business Informatics, St.Petersburg, Russia, e-mail: \texttt{ aminabutdinov@hse.ru.}}}
\maketitle
\begin{abstract}
We prove the existence and describe limiting curves resulting from deviations in partial sums in the ergodic theorem for  cylindrical functions and   polynomial  adic systems. For a general ergodic measure-preserving transformation and a summable function we give a necessary condition for  a limiting curve to exist. Our work generalizes results by \'E. Janvresse, T. de la Rue and Y. Velenik and answers several questions from their work.

\end{abstract}
{\bf Key words:} Polynomial adic systems, ergodic theorem, deviations in ergodic theorem.

\emph{MSC:}   37A30, 28A80

\section{Introduction}

In this paper we develop the notion of a limiting curve introduced  by    \'E.~Janvresse, T. de la Rue and Y. Velenik in \cite{DeLaRue}. Limiting curves were studied for   the Pascal adic in \cite{DeLaRue} and \cite{LodMin15}. In this paper we study it  for  a wider class of adic transformations.

Let  $T$ be a measure preserving transformation defined on a Lebesgue probability space $(X, \mathcal{B},\mu)$ with an invariant ergodic probability measure $\mu$. Let    $g$ denote a function in $L^1(X,\mu)$. Following \cite{DeLaRue} for a point $x\in X$ and a positive integer $j$ we denote the partial sum $\sum\limits_{k=0}^{j-1}g\big(T^kx\big)$ by $S_{x}^g(j)$. We extend the function $S_{x}^g(j)$ to a real valued argument by a linear interpolation and denote extended function by  $F_{x}^g(j)$ or simply $F(j),j\geq 0$.

Let  $(l_n)_{n=1}^{\infty}$ be a sequence of positive integers. We consider continuous on $[0,1]$ functions $\varphi_n(t) = \frac{F(t\cdot l_n(x)) - t \cdot F(l_n)}{R_{n}} \big( \equiv\varphi_{x,l_n}^g(t)  \big),$ where the normalizing coefficient $R_{n}$ is canonically defined to be equal to the maximum in $t\in[0,1]$ of $|F(t\cdot l_n(x)) - t \cdot F(l_n)|$.

\begin{definition}\normalfont 
If there is a sequence $l^g_n(x)\in\mathbb{N}$  such that functions $\varphi_{x,l^g_n(x)}^g$ converge to a (continuous) function $\varphi_x^g$ in sup-metric on $[0,1],$ then
the graph of the limiting function $\varphi=\varphi^g_x$ is called a \emph{limiting curve}, sequence $l_n=l_n^g(x)$ is called a \emph{stabilizing sequence} and the sequence $R_{n} = R_{x,l^g_n(x)}^g$ is called a \emph{normalizing sequence}.  The quadruple $\Big(x, \big(l_n\big)_{n=1}^\infty, \big(R_{n}\big)_{n=1}^\infty, \varphi\Big)$ is called a \emph{limiting bridge}.
\label{def:LimitShape}
\end{definition}

Heuristically, the limiting curve describes   small fluctuations (of certainly renormalized) ergodic sums $\frac{1}{l}F(l), l\in(l_n),$ along the forward trajectory $x, T(x), T^2(x)\dots$. More specifically, for $l\in(l_n)$ it holds $F(t\cdot l) = t F(l)+R_l \varphi(t)+o(R_l)$, where $t\in[0,1].$

In this paper we will always assume that $T$ is an \emph{adic transformation}. Adic transformations were introduced into ergodic theory by A.~M.~Vershik in ~\cite{Ver81} and were extensively  studied since that time.  The following important theorem shows that adicity assumption is not restrictive at all:

\begin{Theorem*}\emph{(A.~M.~Vershik, \cite{ver82})}.
\label{Thm:VershikSuperTheorem}
Any ergodic measure preserving transformation  on a Lebesgue space  is  isomorphic to some adic transformation. Moreover, one can find such an isomorphism that any given countable dense invariant subalgebra of measurable sets goes over into the algebra of cylinder sets.
\end{Theorem*}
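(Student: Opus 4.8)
The plan is to realize $T$ as the \emph{Vershik map} (successor map) on the path space of a Bratteli diagram, and to read off both the diagram and its ordering from a carefully chosen refining sequence of Kakutani--Rokhlin towers adapted to $T$ and to the prescribed subalgebra. Recall that an adic transformation lives on the space $X_B$ of infinite paths in a graded graph (Bratteli diagram) with vertex sets $V_0, V_1, \dots$ and edges joining consecutive levels; a Vershik ordering linearly orders the edges entering each vertex, inducing a partial order on $X_B$ whose successor map is the adic transformation. Thus it suffices to produce a measurable isomorphism $\Phi\colon X \to X_B$ with $\Phi \circ T = \sigma \circ \Phi$ almost everywhere (where $\sigma$ denotes the adic map), carrying the given countable dense invariant subalgebra into the algebra of cylinder sets.

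First I would fix a sequence of generators of the prescribed subalgebra and, invoking Rokhlin's lemma repeatedly, construct a nested sequence of Kakutani--Rokhlin partitions $\xi_1 \prec \xi_2 \prec \cdots$. Each $\xi_n$ partitions $X$ (mod $0$) into finitely many \emph{columns} of the form $\{B, TB, \dots, T^{h-1}B\}$; I would arrange that (i) the heights tend to infinity and the total measure of the column tops tends to zero, (ii) $\xi_{n+1}$ refines $\xi_n$ in the strong sense that each column of $\xi_n$ is a union of whole levels of columns of $\xi_{n+1}$, and (iii) the partitions jointly generate $\mathcal{B}$ and resolve the fixed generators. The technical heart of this step is the simultaneous control of refinement and generation: one must build $\xi_{n+1}$ so that its base sits inside the base of $\xi_n$ while its levels align with $T$, which is the standard but delicate bookkeeping underlying the Bratteli--Vershik formalism.

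Next I would define the Bratteli diagram by letting $V_n$ index the columns of $\xi_n$ and letting the multiplicity of edges from a level-$(n+1)$ vertex to a level-$n$ vertex record how many levels of the corresponding $\xi_{n+1}$-column lie inside the given $\xi_n$-column. The Vershik ordering is dictated by the dynamics: within each column the edge order follows the bottom-to-top action of $T$, so that applying $T$ advances a point to the next level and, at the top of a column, passes to the successor path exactly as the adic map prescribes. The coding map $\Phi$ sending $x$ to the sequence of columns it visits is then measure preserving (each cylinder has measure equal to that of the corresponding union of tower levels) and intertwines $T$ with $\sigma$ off a measure-zero set of ``maximal'' paths.

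The main obstacle, and where the argument requires the most care, is controlling null sets and the boundary behaviour of the Vershik map: one must verify that the set of paths lacking a successor together with the residual top-of-tower sets carry no mass, so that $\sigma$ is defined $\mu$-almost everywhere and $\Phi$ is a genuine isomorphism of measure spaces rather than merely a factor map. Finally, because each generator of the prescribed subalgebra is resolved by some $\xi_n$ and the levels of $\xi_n$ are precisely the cylinders of depth $n$, the invariant subalgebra is carried into the cylinder algebra, which yields the moreover clause.
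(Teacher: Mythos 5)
The paper itself offers no proof of this statement: it is quoted as a background theorem from Vershik's work \cite{ver82}, so there is no internal argument to compare yours against. Judged on its own merits, your outline follows exactly the route of the cited result --- adic realization of $T$ via a nested sequence of Kakutani--Rokhlin towers, encoded as an ordered Bratteli diagram whose successor (Vershik) map is conjugate to $T$, with the prescribed subalgebra resolved by the towers --- so the architecture is the correct one, and it is in fact the architecture of Vershik's original proof.

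However, as a proof it has a genuine gap: essentially all of the mathematical content of the theorem is concentrated in the step you dismiss as ``standard but delicate bookkeeping,'' namely the construction of a sequence of partitions $\xi_1 \prec \xi_2 \prec \cdots$ that simultaneously (i) refine each other \emph{exactly}, with each column of $\xi_n$ a union of whole levels of columns of $\xi_{n+1}$, (ii) jointly generate $\mathcal{B}$ mod $0$, and (iii) resolve every element of the given countable invariant subalgebra \emph{exactly} rather than approximately. Rokhlin's lemma by itself produces a single tower with prescribed height and small error set; it gives no mechanism for forcing a new tower's levels to align exactly with those of a previously built tower while also capturing prescribed sets exactly, and that simultaneous control is precisely Vershik's theorem on uniform (periodic Markov) approximation --- the thing to be proved. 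No argument for it is supplied, so the proposal is an outline of the known strategy rather than a proof. Two smaller slips: the number of edges joining a level-$(n+1)$ vertex to a level-$n$ vertex should count the number of complete \emph{traversals} of the $\xi_n$-column by the $\xi_{n+1}$-column, not the number of levels of the $\xi_{n+1}$-column lying inside it (these differ by a factor of the height of the $\xi_n$-column); and a cylinder of rank $n$ corresponds to a single rung of a single column of $\xi_n$, not to a union of tower levels, which is what makes the ``moreover'' clause work once (iii) is secured.
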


In \cite{ver82,VerLiv92,Ver14} authors encouraged studying different approaches to combinatorics of Markov's compacts (sets of paths in Bratteli diagrams).
In particular, it is interesting to find a natural class of adic transformations such that  the limiting bridges exist for cylindric functions. Moreover, it is interesting to study joint growth rates of stabilizing and normalizing sequences.

In this paper we give necessary condition for a limiting curve to exist.
Next we find necessary and sufficient conditions  for almost sure (in $x$) existence of limiting curves  for a class
of self-similar adic transformations and cylindric functions.  These transformations (in a slightly less generality) were considered by   X. Mela and S. Bailey in \cite{Mela2006} and \cite{Bailey2006}. Our work 	extends  \cite{DeLaRue} and answers several questions from this research.

\section{Limiting curves and cohomologous to a constant functions}

In this section we show that a necessary condition for  limiting curves to exist is unbounded growth of the normalizing coefficient $R_n$. Contrariwise we show that normalizing coefficients are bounded if and only if function $g$ is cohomologous to a constant. In particular this implies that there are no limiting curves for cylindric functions for an ordinary odometer.

\subsection{Notions and definitions}

Let $ B=B(\mathcal{V},\mathcal{E})$ denote a Bratteli diagram defined by the set of vertices $\mathcal{V}$ and the set of edges $\mathcal{E}$. Vertices at the level $n$ are numbered $k=0$ through $L(n)$. We associate to a Bratteli diagram $B$  the space $X = X(B)$ of infinite edge paths  beginning at the vertex $v_0 = (0, 0).$ Following fundamental paper~\cite{Ver81} we assume that there is a linear order  $\leq_{n,k}$ defined on the set of edges with a terminate vertex $(n,k),  0\leq k\leq L(n)$. These linear orders define a lexicographical order on the set of edges paths in $X$ that belong to the same class of the tail partition.  We denote by   $\preceq$ corresponding partial order on $X$. The set of maximal (minimal) paths is defined by $X_{\max}$ (correspondingly, $X_{\min}$).

\begin{definition} \normalfont Adic transformation  $T$ is defined on  $X\setminus\big(X_{\max}\cup X_{\min}\big)$ by setting $Tx, \, x\in X,$ equal to the successor  of $x$, that is, the smallest $y$ that satisfies $y \succ x$.
\label{def:adicSystems}
\end{definition}

Let  $\omega$ be a path in $X$. We denote by $(n,k_n(\omega))$ a vertex through which $\omega$ passes at level $n$.
For a finite path $c=(c_1,\dots,c_n)$ we denote   $k_n(c)$ simply by $k(c)$. A cylinder set $C=[c_1c_2\dots c_{n}]=\{\omega\in X|\omega_{1}=c_1,\omega_{2}=c_2,\dots, \omega_{n}=c_{n} \}$ of a rank $n$ is totally defined by a finite path from the vertex $(0,0) $  to the vertex $(n,k)=(n,k(c)).$ Sets  $\pi_{n,k}$ of lexicographically ordered finite paths $c=(c_0,c_1,\dots,c_{n-1})$, $k(c)=n$, are  in one to one correspondence with  towers $\tau_{n,k}$ made up of corresponding cylinder sets $C_j = \tau_{n,k}(j),1\leqslant j\leqslant\dim(n,k)$.
The  dimension $\dim(n,k)$ of the vertex $(n,k)$ is the total number of such finite paths (rungs of the tower).

We denote by  $\num(c)$ the number  of  finite paths in lexicographically ordered set $\pi_{n,k}$. Evidently, $1\leq \num(c)\leq \dim(n,k).$  For a given level $n$ the set of towers $\{\tau_{n,k}\}_{0\leq k\leq L(n)}$ defines approximation of transformation $T$, see~\cite{Ver81}, \cite{VerLiv92}.

We can consider a vertex $(n,k) $ of Bratteli diagram $B$ as an origin  in a new diagram  $B'_{n,k}=(\mathcal{V}',\mathcal{E}')$. The set of vertices $\mathcal{V}' $, edges $\mathcal{E}'$  and edges paths $X(B'_{n,k})$ are naturally defined. As above  partial order  $\preceq'$    on $X(B')$ is induced by linear orders $\leq_{n',k'}, n'>n$.

\begin{definition}\label{def:delfSimilar} Ordered Bratteli diagram  $(B,\preceq)$ is  \emph{self-similar} if ordered diagrams $(B,\preceq)$ and $(B'_{n,k},\preceq'_{n,k})$ are isomorphic $n\in \mathbb{N},$ $0\leqslant k\leqslant L(n).$
\label{def:selSimilarBratteli}
\end{definition}

Let $\mathscr{F}$ denote the set of all functions $f:X\rightarrow \mathbb{R}.$
We denote by  $\mathscr{F}_N$ the space of cylindric functions of rank $N$ (i.e. functions that are constant on cylinders of rank  $N$).

Let $g\in\mathscr{F}_N, \  N< n.$ We denote by $F_{n,k}^g $ linearly interpolated partial sums $S^g_{x\in\tau_n^k(1)}$. Assume that self-similar Bratteli  diagram $B$ has  $L+1$ vertices at level $N$ and let $\omega\in\pi_{n,k}, 0\leqslant k \leqslant L(n),$ be a finite path such that its initial segment $\omega'=(\omega_{1},\omega_{2},\dots, \omega_{N})$ is a maximal path, i.e. $\num(\omega')=\dim(N,k(\omega'))$. Let ${E}^{N,l}_{n,k}$ denote the number of paths from $(0,0)$ to $(n,k)$ passing through the vertex $(N,l),0\leq l\leq L,$ and not exceeding path $\omega$. We denote by ${\partial}^{N,l}_{n,k}(\omega)$ the ratio of  ${E}^{N,l}_{n,k}$ to $\dim(N,l).$ It is not hard to see
that a partial sum $F_{n,k}^g$ evaluated at   $j=\num(\omega)$ has the following expression:

\begin{equation}
\label{eq:sumOfCylFunction}
F^g_{n,k}(j) = \sum\limits_{l=0}^{L} h^g_{N,l}{\partial}^{N,l}_{n,k}(\omega),\end{equation}
where coefficients $h^g_{N,l}$ are equal to  $F_{N,l}^g(H_{N,l}),\, 0\leqslant l\leqslant L$.

Expression \eqref{eq:sumOfCylFunction} is a generalization of Vandermonde's convolution formula.

\subsection{A necessary condition for existence of limiting curves}

Let $(X,T)$ be an ergodic measure-preserving transformation with invariant measure $\mu$. Let  $g$ be a  summable function and a point  $x\in X$.  We consider a sequence of functions  $\varphi^g_{x,l_n} $ and normalizing coefficients $R_{x,l_n}^g$ given  by the identity \[\varphi_{x,l_n(x)}^g(t) = \frac{S_{x}^g([t\cdot l_n(x)]) - t \cdot S_{x}^g(l_n(x))}{R_{x,l_n(x)}^g},\] where $R_{x,l_n(x)}^g$ equals maximum of absolute value of the numerator. Without loss of generality, we assume that the limit $g^*(x)=\lim\limits_{n\rightarrow\infty}\frac{1}{n}S^g_x $ exists at the point $x$. The following theorem generalizes Lemma $2.1$ from \cite{DeLaRue} for an arbitrary summable function.

\begin{Theorem}
If a continuous limiting curve $\varphi_x^g=\lim_n\varphi^g_{x,l_n} $ exists for $\mu$-a.e. $x$, then the normalizing coefficients  $R_{x,l_n}^g$ are unbounded in $n$.
\label{Th:LimitingCurveNesCond}
\end{Theorem}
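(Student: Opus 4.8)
The plan is to argue by contradiction, testing the candidate curve against the values of $\varphi_{x,l_n}^g$ at the grid points $t=m/l_n$ with $m$ fixed, which converge to the left endpoint $t=0$ as $n\to\infty$. By ergodicity $g^*(x)=\int_X g\,d\mu=:g^*$ for a.e.\ $x$; I write $b_m(x)=S_x^g(m)-m\,g^*$ for the centred partial sums. If $g$ equals a constant $\mu$-a.e., then $S_x^g(j)=j\,g^*$, the numerator $S_x^g([t\cdot l_n])-t\,S_x^g(l_n)$ vanishes at every grid point, $R_{x,l_n}^g\equiv0$, and no limiting curve is defined; so I may assume $g$ is not a.e.\ constant, and I will show that $R_{x,l_n}^g$ is unbounded for a.e.\ $x$ at which a continuous curve exists.

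The crux is to produce, for a.e.\ $x$, a \emph{fixed} index $m_0=m_0(x)$ with $b_{m_0}(x)\neq0$. Since $g$ is not a.e.\ constant, the set $\{g\neq g^*\}$ has positive measure, so the set $A=\{x:\,g(T^jx)=g^*\text{ for all }j\ge0\}$ is, up to a null set, $T$-invariant and cannot have full measure; by ergodicity $\mu(A)=0$. Hence for a.e.\ $x$ some $j$ satisfies $g(T^jx)\neq g^*$, and since $b_{j+1}(x)-b_j(x)=g(T^jx)-g^*\neq0$, at least one of $b_j(x),b_{j+1}(x)$ is nonzero; I fix such an $m_0$. I expect this measure-theoretic step to be the main obstacle, together with pinning down that a stabilizing sequence satisfies $l_n\to\infty$ — without the latter the statement is false, since a constant $l_n$ yields a bounded $R_{x,l_n}^g$.

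With $x$ and $m_0$ fixed as above, I suppose for contradiction that $R_n:=R_{x,l_n}^g\le C$ for all $n$. Evaluating the numerator at $t=m_0/l_n$ (where $[t\cdot l_n]=m_0$ once $l_n\ge m_0$) and cancelling the linear term $m_0g^*$ gives $S_x^g(m_0)-(m_0/l_n)S_x^g(l_n)=b_{m_0}-(m_0/l_n)\,b_{l_n}$; since $l_n\to\infty$ and $b_{l_n}/l_n=S_x^g(l_n)/l_n-g^*\to0$, the right-hand side tends to $b_{m_0}$. Boundedness of $R_n$ then forces $|\varphi_{x,l_n}^g(m_0/l_n)|\ge|S_x^g(m_0)-(m_0/l_n)S_x^g(l_n)|/C\to|b_{m_0}|/C>0$. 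On the other hand $\varphi_{x,l_n}^g(0)=0$ for every $n$, so $\varphi(0)=0$, and because $m_0/l_n\to0$ while $\varphi_{x,l_n}^g\to\varphi$ uniformly with $\varphi$ continuous at $0$, I get $\varphi_{x,l_n}^g(m_0/l_n)\to\varphi(0)=0$. This contradicts the preceding lower bound, so $R_{x,l_n}^g$ must be unbounded, as claimed.
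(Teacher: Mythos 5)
Your proof is correct, and its engine is the same as the paper's: assume $R_{x,l_n}^g\leqslant C$, evaluate $\varphi_{x,l_n}^g$ at grid points $m_0/l_n\to 0$ where the numerator tends to the nonzero constant $S_x^g(m_0)-m_0g^*$, and contradict continuity of the limit at the origin, where it must vanish. The one genuine difference is how the index $m_0$ with $S_x^g(m_0)\neq m_0g^*$ is produced. The paper gets it for free from the normalization: since $R_n$ is the maximum of the numerator, $\varphi_{x,l_n}^g$ has sup-norm $1$, so the limit $\varphi$ is nonzero; and if $\frac1j S_x^g(j)=g^*$ for every $j$, the functions $\varphi_n$ would be identically zero (indeed $R_n=0$ and no curve is defined), so such a $j$ must exist. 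That argument is pointwise -- it works at every individual $x$ where a continuous curve exists, with no appeal to ergodicity or to non-constancy of $g$. You instead obtain $m_0$ measure-theoretically: you split off the case $g=\mathrm{const}$ $\mu$-a.e., and for non-constant $g$ you use ergodicity of the essentially invariant set $\{x\colon g(T^jx)=g^*\ \text{for all}\ j\geqslant 0\}$ to conclude that for a.e.\ $x$ some increment $g(T^jx)-g^*$ is nonzero, hence some centred sum $b_{m_0}(x)\neq0$. This costs you the extra case analysis and yields only the almost-everywhere statement (which is all the theorem asserts), whereas the paper's normalization observation is shorter and strictly stronger. A side benefit of your write-up: you explicitly flag that the argument needs $l_n\to\infty$ (a constant sequence $l_n$ would give bounded $R_n$ with a trivially convergent $\varphi_n$), an assumption the paper's proof also uses silently when it sends $j/l_n\to0$.
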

\begin{proof}
Assume the contrary that  $|R_{x,l_n}^g|\leqslant K$. For simplicity we introduce the following notation:  $S=S_x^g,$ $\varphi_n = \varphi^g_{x,l_n},$ $R_n = R_{x,l_n}^g$ and  $ \varphi = \varphi_x$. Since $\varphi\neq0,$ there is $j\in\mathbb{N}$ such that  $\frac{1}{j}S(j)\neq g^*.$ This in turn implies $\liminf_n\big|\varphi_n(\frac{j}{l_n})\big|= \liminf_n \frac{1}{R_n}\big|S(j)-\frac{j S(l_n)}{l_n}\big|\geqslant \frac{1}{K}|S(j)-jg^*|=\frac{j}{K}\big|\frac{1}{j}S(i)-g^*\big|>0,$
contradicting continuity of the limiting curve $\varphi$ at the origin.

\end{proof}

\begin{definition}\normalfont  A function $g\in L^\infty(X,\mu)$  ($\mu$-a.e.) of the form $g~=~c+h\circ T - h $ for some $c\in \mathbb{R}$ and $h\in L^\infty(X,\mu)$ is called cohomologous to a constant in $L^\infty$.
\end{definition}

\begin{Theorem}
\label{Th:RBoundnessIsEquiToCohomolToConst}
Normalizing sequence $R_{x,l_n}^g$ is bounded if and only if function $g$ is cohomologous to a constant.
\end{Theorem}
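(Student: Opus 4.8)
The plan is to prove both directions of the equivalence, treating the ``if'' direction as the easy one and the ``only if'' direction as the substantive one. For the \emph{if} direction, suppose $g = c + h\circ T - h$ with $h \in L^\infty(X,\mu)$ and $c \in \mathbb{R}$. Then the partial sums telescope: $S_x^g(j) = \sum_{k=0}^{j-1} g(T^k x) = cj + h(T^j x) - h(x)$. Substituting this into the numerator of $\varphi^g_{x,l_n}$, the linear-in-$j$ term $cj$ is exactly cancelled by the $t\cdot S_x^g(l_n)$ correction (up to the interpolation error from replacing $t\cdot l_n$ by $[t\cdot l_n]$, which is bounded by $\|g\|_\infty$), and what remains is a combination of $h$-values evaluated along the orbit. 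Since $h\in L^\infty$, every such term is bounded by $2\|h\|_\infty$ plus the interpolation error, so the numerator, and hence $R^g_{x,l_n}$, is bounded uniformly in $n$.

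For the \emph{only if} direction, assume $R^g_{x,l_n}$ is bounded, say $R^g_{x,l_n}\le K$; I want to produce the transfer function $h$ and constant $c$. The natural candidate for $c$ is the ergodic average $g^*(x)=\lim_n \tfrac1n S^g_x(n)$, which by ergodicity equals $\int g\,d\mu$ almost everywhere and is thus a genuine constant. Set $\Phi(j) := S_x^g(j) - j\,g^* = S_x^g(j) - cj$, the centered partial sum. The key observation is that boundedness of $R^g_{x,l_n}$ for the canonically chosen normalization forces the deviations $\big|S^g_x(j) - \tfrac{j}{l_n}S^g_x(l_n)\big|$ to be uniformly bounded over $j\in[0,l_n]$; letting $n\to\infty$ and using $\tfrac{1}{l_n}S^g_x(l_n)\to g^*=c$ shows that $\Phi$ itself is bounded along the relevant indices. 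I would then define $h$ by declaring $h(x) := -\Phi_\infty(x)$ for an appropriately chosen limiting value of the centered sums, and verify the coboundary identity $g = c + h\circ T - h$ by checking that $\Phi(j+1)-\Phi(j) = g(T^j x) - c$, which is immediate from the definition of $\Phi$, so that $h\circ T - h$ reproduces $g-c$ along every orbit.

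The main obstacle is the passage from ``bounded partial-sum deviations along a sequence $l_n$'' to a genuine $L^\infty$ function $h$ defined \emph{globally} on $X$ and measurable. The difficulty is twofold: first, boundedness is a priori only known along orbits and along the particular index sequence, so I must argue that the bound is uniform in a way that survives the definition of $h$ as a function of the point $x$ rather than of the orbit index $j$; second, I must ensure measurability and that $h$ genuinely lies in $L^\infty$ rather than merely being finite pointwise. The standard device here is the \textbf{Gottschalk--Hedlund} theorem (or its measurable analogue): for a function of zero integral, boundedness of the partial sums $\sup_j |S^g_x(j) - cj| < \infty$ on a set of positive measure, combined with ergodicity, is equivalent to $g-c$ being an $L^\infty$ coboundary. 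I would invoke this after reducing the statement to the hypothesis of uniformly bounded centered sums, so the real work is establishing that the bound on $R^g_{x,l_n}$ delivers exactly the sup-bound on $|\Phi(j)|$ that Gottschalk--Hedlund requires.

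Concretely, the reduction step proceeds as follows. From $R^g_{x,l_n}\le K$ we get $\big|S^g_x(j)-\tfrac{j}{l_n}S^g_x(l_n)\big|\le K$ for all $0\le j\le l_n$, and writing $\tfrac{j}{l_n}S^g_x(l_n) = j\,g^* + \tfrac{j}{l_n}\Phi(l_n)$ gives
\[
|\Phi(j)| \le K + \frac{j}{l_n}\,|\Phi(l_n)| \le K + |\Phi(l_n)|.
\]
Taking $j=l_n$ in the deviation bound shows $\Phi(l_n)$ enters only through terms that telescope against themselves, so one obtains a uniform bound $\sup_j|\Phi(j)|\le 2K$ (the precise constant is routine). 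This is exactly the hypothesis needed to conclude $g$ is an $L^\infty$ coboundary, completing the proof.
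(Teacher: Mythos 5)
Your proof follows essentially the same route as the paper's: the easy direction by telescoping, and the converse by fixing $j$ and letting $n\to\infty$ in $\big|S^g_x(j)-\tfrac{j}{l_n}S^g_x(l_n)\big|\le K$ to obtain $\sup_j\big|S^{g-g^*}_x(j)\big|<\infty$, then invoking the bounded-coboundary theorem (the paper cites Kachurovskii's Theorem 19 and Hal\'asz, which is precisely the measurable Gottschalk--Hedlund analogue you name). One caveat: the manipulation in your final paragraph (taking $j=l_n$ and claiming the terms ``telescope'' to give $\sup_j|\Phi(j)|\le 2K$) is vacuous as written; the correct justification is the one you already gave earlier --- for fixed $j$, $\tfrac{j}{l_n}|\Phi(l_n)|=j\,\big|\tfrac{1}{l_n}S^g_x(l_n)-g^*\big|\to 0$, which yields $\sup_j|\Phi(j)|\le K$ directly.
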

\begin{proof}
Sums $\sum\limits_{j=0}^{n-1} (g-g^*)\circ T^j$ of a cohomologous function are $\mu$-a.e. bounded, therefore normalizing  coefficients~$R_{x,l_n}^g$ are $\mu$-a.e. bounded too.

The proof of the converse statement exploits the result by A.~G. Kachurovskiy from \cite{Kac96}. Assume that the normalizing coefficients $R_{x,l_n}^g$ are bounded. Then for $\mu$-a.e. point $x\in X$ and for any $j\in\mathbb{N}$ the following inequality holds $|S^g_x(j)-\frac{j}{l_n}S^g_x(l_n)|\leqslant C$. Going to the limit in $n$, we see that
$|\sum\limits_{i=1}^{j}f\circ T^i(x)|\leqslant C,$
where $f = g-g^*$. Theorem $19$  from \cite{Kac96} (see also  G.~Halasz, \cite{Halasz1976}), inequality $|S^f_x|\leqslant C,$ is equivalent to existence of a function $h\in L^\infty$, such that $f = h\circ T-h.$ Therefore $g$ equals to $h\circ T-h+g^*.$
\end{proof}

\begin{definition}\normalfont
Let $B$ be a Bratteli diagram such that there is only one vertex at each
level, and let the edge ordering be such that the edges increase from left to right.  This transformation is called an \emph{odometer}. A stationary odometer
is an odometer for which the number of edges connecting consecutive levels is constant.
\end{definition}

\begin{figure}[h]
                \centering
                \includegraphics[scale=0.7]{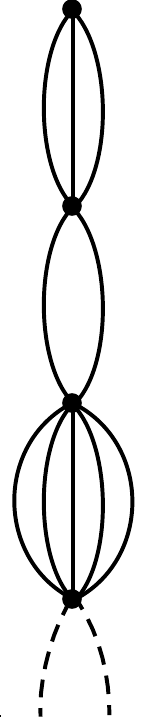}
                \caption{A Bratteli diagram of an odometer.}
\end{figure}

\begin{Theorem} Let $(X,T)$ be an odometer. Any cylindric function $g\in\mathscr{F}_N$  is cohomologous to a constant. Therefore there is no limiting curve for a cylindric function.
\end{Theorem}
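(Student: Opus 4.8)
The plan is to prove the cohomology assertion by hand and then let the two preceding theorems do the rest. First I would unwind the odometer structure. Since $B$ has a single vertex at every level, a point $x\in X$ is a sequence $(\omega_1,\omega_2,\dots)$ with $\omega_n\in\{0,\dots,b_n-1\}$, where $b_n$ is the number of edges between levels $n-1$ and $n$, and $T$ is the familiar ``add one with carry'' map. The first $N$ coordinates fix the position $m(x)=\num(\omega_1,\dots,\omega_N)-1$ inside the unique rank-$N$ tower, so $m(x)$ runs over $\mathbb{Z}/D\mathbb{Z}$ with $D=\dim(N,0)=b_1\cdots b_N$. Because $g\in\mathscr{F}_N$, it factors through the position: $g(x)=\tilde g(m(x))$ for some $\tilde g:\mathbb{Z}/D\mathbb{Z}\to\mathbb{R}$. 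Note that stationarity is not needed; only the first $N$ levels enter.

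The key observation is that, off the measure-zero set $X_{\max}\cup X_{\min}$, the map $T$ acts on $m$ exactly as the rotation $m\mapsto m+1\pmod D$. Indeed, if $m(x)<D-1$ the successor merely increments the first $N$ coordinates; if $m(x)=D-1$ the successor resets them to position $0$ and carries to levels $>N$, which turns $m=D-1$ into $m=0$ while altering only coordinates that $g$ ignores. Thus I would solve the coboundary equation on the finite cyclic group $\mathbb{Z}/D\mathbb{Z}$ carrying the rotation by one: set $c=\tfrac1D\sum_{i=0}^{D-1}\tilde g(i)$ and define $\tilde h(m)=\sum_{i=0}^{m-1}\big(\tilde g(i)-c\big)$ for $0\le m\le D-1$. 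Then $\tilde h(m+1)-\tilde h(m)=\tilde g(m)-c$ for $m<D-1$, and the wraparound identity $\tilde h(0)-\tilde h(D-1)=\tilde g(D-1)-c$ holds automatically, precisely because $\sum_{i=0}^{D-1}(\tilde g(i)-c)=0$.

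Lifting back, I would put $h=\tilde h\circ m\in\mathscr{F}_N$, which is bounded since $\tilde h$ takes finitely many values. Combining the two cases above shows $h\circ T-h=g-c$ for $\mu$-a.e.\ $x$, so $g=c+h\circ T-h$ is cohomologous to a constant in $L^\infty$. By Theorem \ref{Th:RBoundnessIsEquiToCohomolToConst} the normalizing coefficients $R^g_{x,l_n}$ are then $\mu$-a.e.\ bounded, and the contrapositive of Theorem \ref{Th:LimitingCurveNesCond} rules out the existence of a limiting curve for $g$.

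I expect the only genuine obstacle to be the carry step: one must be sure that the ``big carry'' to levels beyond $N$, which occurs when $m(x)=D-1$, does not break the coboundary relation. This is settled by choosing $c$ to be the \emph{exact} mean of $\tilde g$, which forces the total increment over one full cycle to vanish and hence makes $\tilde h$ a genuinely well-defined function on $\mathbb{Z}/D\mathbb{Z}$; the wraparound equation is then nothing but the compatibility condition $\sum_i(\tilde g(i)-c)=0$, so the carry case causes no trouble at all.
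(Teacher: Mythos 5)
Your proof is correct and follows essentially the same route as the paper: both identify the constant as the mean of $g$ over the single rank-$N$ tower and exhibit an explicit bounded, rank-$N$ cylindric transfer function, then conclude via Theorem~\ref{Th:RBoundnessIsEquiToCohomolToConst} and the contrapositive of Theorem~\ref{Th:LimitingCurveNesCond}. The only cosmetic difference is that you solve the coboundary equation directly on the cycle $\mathbb{Z}/D\mathbb{Z}$ by discrete integration, whereas the paper phrases the same fact as membership of $g-C$ in the span of the rung-indicator coboundaries $f_j-f_j\circ T$.
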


\begin{proof} There is only one vertex  $(n,0)$ at each level $n.$ Expression~ \eqref{eq:sumOfCylFunction} for the partial sum $F_{n,0}^g(i)$ is  evidently  valid for any odometer (even without assumption of self-similarity).  Moreover, expression  \eqref{eq:sumOfCylFunction}  is defined by the only coefficient  $h^g_{N,0}$ and therefore is proportional to $H_N = \dim(N,0)$. We can subtract such  constant  $C$ to the function $g$
that equality  $h^{g-C}_{N,0}=0 $ holds. But this is  equivalent to the following: The function function $g-C$ belongs to the linear space spanned by the functions $f_{j} - f_j\circ T, 1\leqslant j\leqslant H_N,$ where $f_{j}$  is the indicator-function of the $j$-th rung in the tower $\tau_{N,0}$. Therefore function $g-C$ is cohomologous to zero.
\end{proof}

\section{Existence of limiting curves for polynomial adic systems}

In this part we will show that any not cohomologous to a constant cylindric function in a polynomial adic system has a limiting curve. These generalizes Theorem 2.4. from \cite{DeLaRue}.

\subsection{Polynomial adic systems}

Let $p(x) = a_0 + a_1 x\dots + a_{d} x^{d} $  be an integer polynomial of degree   $d\in \mathbb{N}$ with positive integer coefficients $a_i, 0\leq i\leq d.$ Bratteli diagram $B_{p} =(\mathcal{V},\mathcal{E})_{p} $ associated to  polynomial  $p(x)$ is defined as follows:
\begin{enumerate}
\item Number of vertices grows linearly: $|\mathcal{V}_0|=1$ и $|\mathcal{V}_n| = |\mathcal{V}_{n-1}|+d=nd+1, n\in \mathbb{N}$.
\item If $0\leqslant j\leqslant d$ vertices $(n,k)$ and $(n+1,k+j)$ are connected by   $a_j$ edges. 
\end{enumerate}
Polynomial $p(x)$ is called a \emph{generating polynomial} of the diagram~$B_{p} $, see paper \cite{Bailey2006} by S.~Bailey.

\begin{figure}[h]
                \centering
                \includegraphics[scale=1]{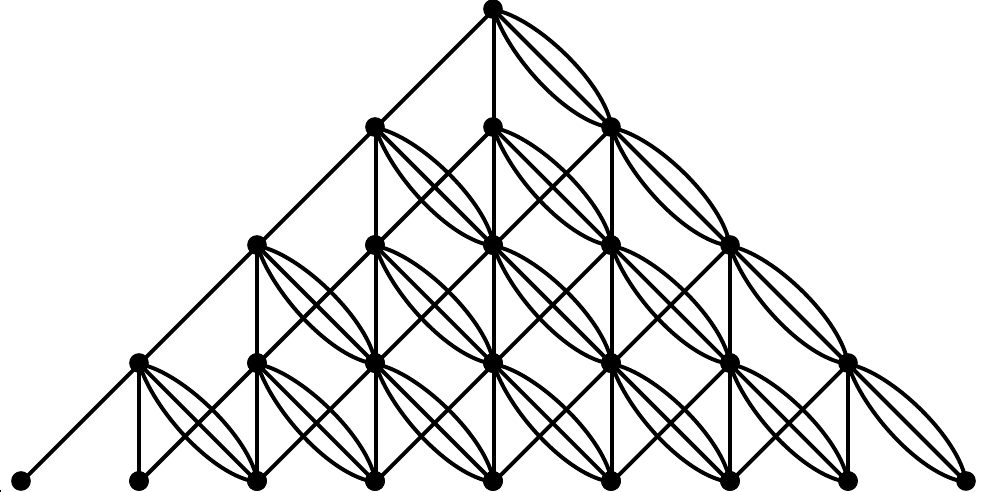}
                \caption{Bratteli diagram associated to polynomial $1+x+3x^2.$}
\end{figure}

Since the number  of edges into vertex $(n, k)$
is exactly $p(1)=a_0+a_1+\dots+a_d$ it is natural to use the alphabet $\mathcal{A} = \{0,1,\dots,a_0+a_1+\dots+a_d-1\}$ for edges labeling. We call  a lexicographical order  defined in \cite{Bailey2006} \emph{a canonical order}. It is defined  as follows: Edges connecting $(0, 0)$ with
$(1, d)$ are labeled through  $0$ to  $a_d-1$ (from left to right); edges connecting $(0,0)$ and $(1,d-1)$, are indexed by $a_d$ to $a_d+a_{d-1}-1$,  etc. Edges connecting  $(0,0)$ and $(1,0)$, are indexed through $a_0+a_1+\dots+a_{d-1}$ to $a_0+a_1+\dots+a_{d}$.

Infinite paths are totally defined by this labeling and may be considered as one sided infinite sequences in  $\mathcal{A}^{\mathbb{N}}.$ We denote the path space by  $X_p.$

\begin{figure}[h]
                \centering
                \includegraphics[scale=1.2]{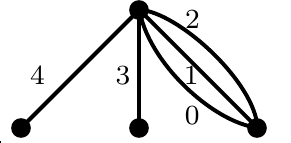}
                \caption{Labeling of the polynomial  system associated to $1+x+3x^2.$}
                \label{fig:Labeling}
\end{figure}

We denote by $T_{p}$ the adic transformation associated with the canonical ordering.

\textbf{Remark}. Any self-similar Bratteli diagram is either a diagram of a stationary odometer or is associated to some polynomial $p(x)$. Any non-canonical ordering is obtained from canonical by some substitution $\sigma$.

Everywhere below we stick to the  canonical order. Case of general order needs several straightforward changes that are left to the reader.

Dimension of the vertex $(n,k)$ from diagram $B_{p}$ equals to  the coefficient of $x^k$ in the polynomial $(p(x))^n$ and is called \emph{ generalized binomial coefficient}. We denote it by $C_p(n,k)$. For $n>1$ coefficients $C_p(n,k)$ can be evaluated by a  recursive expression $C_p(n,k)=\sum_{j=0}^da_jC_d(n-1,k-j)$.

In  \cite{Mela2006} and \cite{Bailey2006} X.~ M\'ela and S.~Bailey showed that the fully supported invariant ergodic measures of the system $(X_{p},T_{p})$ are the one-parameter family of Bernoulli measures:

\begin{Theorem}\emph{(S.~Bailey, \cite{Bailey2006}, X.~ M\'ela, \cite{Mela2006})} 1. Let $q\in(0,\frac{1}{a_0})$ and $t_q$ is the unique solution in $(0,1)$  to the equation
\[a_0q^d+a_1q^{d-1}t+\dots+a_dt^d-q^{d-1}=0,\]
then the invariant, fully supported, ergodic probability measures for the adic transformation $T_{p}$ are the one-parameter family of Bernoulli measures $\mu_q, {q\in(0,\frac{1}{a_0})},$
\[\mu_q=\prod_{0}^\infty\bigg(\underbrace{q,\dots,q}_{a_0},\underbrace{t_q,\dots,t_q}_{a_1},\underbrace{\frac{t_q^2}{q},\dots,\frac{t_q^2}{q}}_{a_2},\dots,\underbrace{\frac{t_q^d}{q^{d-1}},\dots,\frac{t_q^d}{q^{d-1}}}_{a_d}\bigg).\]  2.Invariant measures that
are not fully supported are\[\prod_{0}^\infty\bigg(\underbrace{\frac{1}{a_0},\dots,\frac{1}{a_0}}_{a_0},0,\dots,0\bigg) \quad \text{and} \quad \prod_{0}^\infty\bigg(0,\dots,0,\underbrace{\frac{1}{a_d},\dots,\frac{1}{a_d}}_{a_d},\bigg).\]
\label{Th:SLPmeasures}
\end{Theorem}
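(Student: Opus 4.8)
The plan is to identify the $T_p$-invariant measures with the \emph{central} (tower-uniform) measures on $X_p$, reduce centrality to a backward harmonicity recursion, verify that the listed $\mu_q$ solve it, and then single out the \emph{ergodic} solutions by Vershik's ergodic method. First I would reduce invariance to centrality. The orbits of the adic transformation are exactly the cofinal (tail) equivalence classes of paths avoiding $X_{\max}\cup X_{\min}$: replacing an initial segment $(x_1,\dots,x_n)$ by any other segment reaching the same vertex $(n,k)$ yields a cofinal path, and the successor map realizes all such replacements by iteration. Since $T_p$ cyclically permutes the rungs $\tau_{n,k}(1),\dots,\tau_{n,k}(\dim(n,k))$ of every tower, a measure is $T_p$-invariant iff it is central, i.e. any two rank-$n$ cylinders ending at the same $(n,k)$ have equal measure. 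Writing $w_n(k)$ for this common value, the consistency of $\mu$ across levels becomes the recursion
\[
w_n(k)=\sum_{j=0}^{d}a_j\,w_{n+1}(k+j),\qquad \sum_{k}C_p(n,k)\,w_n(k)=1,
\]
since a rank-$n$ cylinder ending at $(n,k)$ splits into $a_j$ cylinders ending at $(n+1,k+j)$ for each $0\le j\le d$. Thus invariant measures are the nonnegative solutions, and the ergodic ones are the extreme points.

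Next I would check that each $\mu_q$ is central, hence invariant. It assigns every ``type-$j$'' edge (from $(n,k)$ to $(n+1,k+j)$) the weight $b_j=t_q^{\,j}/q^{\,j-1}=q\,(t_q/q)^j=q\,r^j$ with $r=t_q/q$; a rank-$n$ cylinder reaching $(n,k)$ through edges of types $j_1,\dots,j_n$ then has mass $\prod_i b_{j_i}=q^{\,n}r^{\,\sum_i j_i}=q^{\,n}r^{\,k}$, depending only on $(n,k)$ precisely because $\log b_j$ is affine in $j$. So $\mu_q$ is central with $w_n(k)=q^n r^k$, and the recursion collapses to the single normalization $\sum_{j}a_j b_j=q\,p(r)=1$, which is exactly $a_0q^d+a_1q^{d-1}t_q+\dots+a_dt_q^d=q^{d-1}$ after clearing denominators. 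Ergodicity follows from Kolmogorov's zero–one law: the cofinal relation is generated by finite coordinate changes, so the relevant tail $\sigma$-algebra of the product measure $\mu_q$ is trivial.

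The hard part is \emph{completeness}: that every ergodic central measure is some $\mu_q$ or a degenerate boundary measure. For this I would use the ergodic method of Vershik--Kerov, by which the extreme central measures are the weak limits of the Martin kernels
\[
\frac{\#\{\text{paths }(m,l)\to(n,k)\}}{C_p(n,k)}=\frac{C_p(n-m,\,k-l)}{C_p(n,k)}
\]
along sequences $(n,k)$ with $k/n\to\theta$. The decisive computation is the asymptotics of the generalized binomial coefficients by a saddle-point / local central limit argument: if $r=r(\theta)>0$ is the unique root of $r\,p'(r)/p(r)=\theta$, then $C_p(n,k)\sim p(r)^n r^{-k}\big/\sqrt{2\pi n\,\sigma^2(r)}$ uniformly on compact $\theta$-ranges, whence the kernel converges to $p(r)^{-m}r^{\,l}$. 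This is precisely the harmonic function $w_m(l)=q^m r^l$ of $\mu_q$ with $q=1/p(r)$, so no extreme measures beyond the $\mu_q$ arise.

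Finally I would match parameters and dispose of the endpoints. As $\theta$ runs over $(0,d)$ the saddle $r$ runs over $(0,\infty)$ and $q=1/p(r)$ runs over $(0,1/a_0)$ (since $p(r)\to a_0$ as $r\to0$ and $p(r)\to\infty$ as $r\to\infty$), matching the stated range; the two limits $\theta=0$ and $\theta=d$, i.e. $r\to0$ and $r\to\infty$, force all mass onto type-$0$ resp.\ type-$d$ edges and degenerate into the two non-fully-supported measures $\prod(\tfrac1{a_0},\dots,\tfrac1{a_0},0,\dots,0)$ and $\prod(0,\dots,0,\tfrac1{a_d},\dots,\tfrac1{a_d})$. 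I expect the uniform local-limit asymptotics for $C_p(n,k)$, together with the verification that these Martin-kernel limits exhaust the tail boundary (leaving no exotic extreme central measures), to be the main technical obstacle.
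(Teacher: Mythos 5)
This theorem is not proved in the paper at all: it is quoted with attribution to S.~Bailey and X.~M\'ela, so there is no internal proof to compare against. Your reconstruction follows the same general architecture as the cited works (invariant $\Leftrightarrow$ central via the tower structure; verification that $\mu_q$ is central, with the normalization $q\,p(t_q/q)=1$ reproducing the defining equation for $t_q$; exhaustiveness via the Vershik--Kerov ergodic method and local-limit asymptotics of the coefficients of $p(x)^n$), and the centrality computation $w_n(k)=q^n r^k$, the recursion $w_n(k)=\sum_j a_j w_{n+1}(k+j)$, and the parameter matching $q=1/p(r)$, $\theta=r p'(r)/p(r)\in(0,d)$ are all correct. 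Since every $a_i\geqslant 1$, there is also no lattice-span obstruction to the uniform local CLT you invoke, so that part of the sketch is plausible, though it is the technical heart and you leave it (together with the a.e.\ convergence of $k_n/n$ under an arbitrary ergodic central measure) unproved.

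However, your ergodicity step contains a genuine error. You claim ``the cofinal relation is generated by finite coordinate changes, so the relevant tail $\sigma$-algebra of the product measure $\mu_q$ is trivial'' and invoke Kolmogorov's $0$--$1$ law. The cofinal relation is \emph{not} generated by arbitrary finite coordinate changes: two paths with equal tails are cofinal only if their prefixes reach the same vertex, i.e.\ have the same type-sum $k_n$. (For the Pascal case, $(1,0,0,\dots)$ and $(0,0,0,\dots)$ have equal tails but lie in different orbits.) Consequently $T_p$-invariant sets form a $\sigma$-algebra that is strictly larger than the tail $\sigma$-algebra --- e.g.\ the set of paths using exactly five type-$1$ edges is invariant but not a tail event --- so Kolmogorov's theorem does not apply to it. What is true is that prefix replacements preserving $k_n$ are realized by finite permutations of coordinates, so invariant sets are \emph{exchangeable} events, and the correct tool is the Hewitt--Savage $0$--$1$ law (exactly what the paper itself invokes when discussing the Pascal adic). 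The fix is one line, but as written the step is wrong.
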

\begin{definition} \emph{Polynomial adic system associated with polynomial} $p(x)$, is a triple $(X_{p},T_{p},\mu_q),\ q\in(0,\frac{1}{a_0}).$
\label{def:SLPsystem}
\end{definition}

In particular, if $p(x)=1+x$ system $(X_{p},T_{p},\mu_q),\ q\in(0,1),$ is the well-known Pascal adic transformation. Transformation was defined  in \cite{ver82} by A.~M.~Vershik  \footnote{However earlier isomorphic transformation was used by \cite{HajanItoKakutani} and \cite{Kakutani1976}.} and was studied in many works \cite{MelaPetersen, PascalLooselyBernoulli, Vershik11, Ver15}, see more complete list in the last two papers. For the Pascal adic space $X_p$ is an infinite dimensional unit cube $I=\{0,1\}^\infty$, while measures   $\mu_q$  are dyadic Bernoulli measures $\prod_1^\infty(q,1-q)$. Transformation $T_p=P$ is defined by the following formula (see \cite{ver82})\footnote{$P^k(x), \ k\in\mathbb{Z},$ is defined for all $x$  except eventually diagonal, i.e.,  except those $x$ for which there exists $ n \in \mathbb{N}$ such that either $x_k = 0$ for all $k \geq n$ or $x_k = 1$ for all $k \geq n$}:

\begin{equation}
\label{eq:VershikDef}
 x\mapsto Px; \ \ P(0^{m-l}1^l\textbf{10}\dots)=1^l0^{m-l}\textbf{01}\dots\end{equation}
 (that is only first $m+2$ coordinates of $x$ are being changed). De-Finetti's theorem and Hewitt-Savage $0$--$1$ law imply that all $P$-invariant ergodic measures are the Bernoulli measures  $\mu_p = \prod_1^\infty(p,1-p)$, where $0<p<1$.

Below we enlist several known properties of the polynomial systems:
\begin{enumerate}
\item Polynomial systems are weakly bernoulli  (the proof  essentially  follows \cite{PascalLooselyBernoulli} and is performed in \cite{Mela2006} and \cite{Bailey2006}).
\item Complexity function has polynomial growth rate (for the Pascal adic first term of asymptotic expansion is known to be equal to $\frac{n^3}{6}$, see  \cite{MelaPetersen}).
\item Polynomial system $(X_{p},T_{p},\mu_q)$ defined by a polynomial $p(x) = a_0+a_1x$ with  $a_0 a_1>1$ has a non-empty set of non-constant eigenfunctions.
\end{enumerate}

Authors of  \cite{DeLaRue} studied limiting curves  for the Pascal adic transformation $(I,P,\mu_q), q\in(0,1)$.

\begin{Theorem}
 $($\emph{\cite{DeLaRue}}, Theorem $2.4.)$\label{Th:ExistanceOfLimitShapePN} Let  $P$ be the Pascal adic transformation defined on Lebesgue  probability space $(I, \mathcal{B},\mu_{q}),q\in(0,1),$  and $g$  be a cylindric function from $\mathscr{F}_N$. Then for $\mu_q$-a.e. $x$
limiting curve $\varphi^g_x \in C[0,1]$ exists if and only if  $g$ is not cohomologous to a constant.
\end{Theorem}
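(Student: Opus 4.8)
The two directions are of very different character, so I would treat them separately. The \emph{only if} direction is immediate from what has already been proved: if $g$ is cohomologous to a constant, then by Theorem~\ref{Th:RBoundnessIsEquiToCohomolToConst} the normalizing coefficients $R^g_{x,l_n}$ are $\mu_q$-a.e.\ bounded, and the contrapositive of Theorem~\ref{Th:LimitingCurveNesCond} then shows that no continuous limiting curve can exist. Hence the entire content lies in the \emph{if} direction: assuming $g$ is not cohomologous to a constant, I must exhibit a stabilizing sequence $l_n(x)$ and prove $\mu_q$-a.e.\ convergence of $\varphi^g_{x,l_n}$ in sup-metric.

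First I would normalize the problem. Writing $g$ as a constant plus a coboundary plus a canonical remainder, and noting that the coboundary contributes only a $\mu_q$-a.e.\ bounded amount to every partial sum, I may replace $g$ by a representative whose only relevant data are the coefficients $h^g_{N,l}$, $0\le l\le N$, entering~\eqref{eq:sumOfCylFunction} (for the Pascal diagram $L=N$). Exactly as in the odometer case, the hypothesis that $g$ is not cohomologous to a constant translates, through Theorem~\ref{Th:RBoundnessIsEquiToCohomolToConst}, into non-degeneracy of the data $(h^g_{N,l})_l$, and it is this non-degeneracy that will force $R_n\to\infty$.

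The geometric engine is the self-similarity of the Pascal diagram. Through the canonical order the tower $\tau_{n+1,k}$ of height $\binom{n+1}{k}$ decomposes into the concatenation of the towers $\tau_{n,k}$ and $\tau_{n,k-1}$, of heights $\binom{n}{k}$ and $\binom{n}{k-1}$; translating this into partial sums via~\eqref{eq:sumOfCylFunction} yields a recursion expressing $F_{n+1,k}$ as a glueing of two rescaled copies of $F_{n,k}$ and $F_{n,k-1}$. I would take the stabilizing sequence to be the tower height $l_n(x)=\dim(n,k_n(x))$, so that $\varphi^g_{x,l_n}$ is the renormalized deviation accumulated over one full traversal of the level-$n$ tower containing $x$. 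The recursion then becomes a renormalization relation for the functions $\varphi_n$, and the problem reduces to showing that this relation has a unique continuous attracting limit; the building blocks $\partial^{N,l}_{n,k}$ are precisely the self-affine incomplete-sum functions whose renormalization limits furnish the de~Rham-type limiting curve.

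Two ingredients drive the convergence. The probabilistic one: under the Bernoulli measure $\mu_q$ the coordinates are i.i.d., so by the strong law of large numbers $k_n(x)/n\to\alpha$ $\mu_q$-a.e.\ for a fixed $\alpha\in(0,1)$, whence the rescaling ratios $\binom{n}{k_n-1}/\binom{n}{k_n}\to \alpha/(1-\alpha)$ and the relative weights entering the recursion converge to constants, so that the self-affine limit is well defined. The analytic one: since by construction $\|\varphi_n\|_\infty=1$, convergence will automatically produce a non-constant limit (it vanishes at both endpoints), but one must still show the successive $\varphi_n$ form a Cauchy sequence in $C[0,1]$, the increments being controlled by the contraction in the renormalization, and simultaneously pin down the growth rate of $R_n$ along $(l_n)$ — here the non-degeneracy of $(h^g_{N,l})_l$ is essential, as it guarantees $R_n\to\infty$ and thereby smooths out the behaviour near the origin that would otherwise obstruct a continuous limit. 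This last estimate, namely sup-norm convergence of the renormalized deviation functions matched to the correct order of $R_n$, is the step I expect to be the main obstacle; once it is in place the limit $\varphi^g_x$ is continuous and non-constant, completing the equivalence.
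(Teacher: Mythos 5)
Your ``only if'' direction coincides with the paper's: Theorem~\ref{Th:RBoundnessIsEquiToCohomolToConst} plus the contrapositive of Theorem~\ref{Th:LimitingCurveNesCond}. (In the paper the statement itself is quoted from \cite{DeLaRue}; the machinery behind it appears as the generalization, Theorem~\ref{Th:SLP-LimitingCurveExistance}, proved from Lemma~\ref{Th:firstFloor}, Theorem~\ref{Th:ExistanceOfLimitShapeSLP} and Theorem~\ref{Th:RBoundnessIsEquiToCohomolToConst}.) The critical gap is in your ``if'' direction, and it is not the one you flag at the end. Setting $l_n(x)=\dim(n,k_n(x))$ does \emph{not} make $\varphi^g_{x,l_n}$ ``the renormalized deviation accumulated over one full traversal of the level-$n$ tower containing $x$'': that identification holds only if $x$ lies on the bottom rung of its tower, i.e.\ $\num(x_1\dots x_n)=1$. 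For a typical $x$ the orbit segment $x,Tx,\dots,T^{l_n-1}x$ starts at position $\num(x_1\dots x_n)$ in the middle of $\tau_{n,k_n}$, exits that tower and runs through rungs of other towers, so the self-similar recursion for $F^g_{n,k}$ --- a statement about sums started at the \emph{base} of a tower --- says nothing directly about $S^g_x$. This is exactly what Lemma~\ref{Th:firstFloor} supplies: by recurrence of the associated one-dimensional random walk, $\mu_q$-a.e.\ $x$ has infinitely many levels $n$ with $\num(x_1\dots x_n)/H_{n,k_n}<\varepsilon$, and only along such levels can the tower-based (combinatorial) convergence be transferred to the orbit-based functions $\varphi^g_{x,l_n}$. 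Without this ingredient your stabilizing sequence is the wrong object and the a.e.\ statement does not follow.

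Second, the analytic core that you defer (``the step I expect to be the main obstacle'') is the actual content of the proof, and the paper does not obtain it by a contraction or attracting-fixed-point argument, nor along the full sequence of levels. It proves only \emph{subsequential} convergence, which suffices because the theorem asserts existence of \emph{some} stabilizing sequence; your stronger claim that the $\varphi_n$ form a Cauchy sequence in $C[0,1]$ is unproved and should not be expected for general $g\in\mathscr{F}_N$, since the relative weights and signs of the coefficients $h^g_{N,l}\,\partial^{N,l}_{n,k}/R_n$ depend on the fluctuating random-walk position $k_n$ (this is the origin of the sign $\alpha_{g,x}\in\{-1,1\}$ in Theorem~\ref{Thm:TakagiOnly}). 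The paper's route is: approximate $F^g_{n,k}$ by $\tilde{F}^{g,M}_{n,k}$ (Lemma~\ref{Lm:tildeF}); regard these as polygonal functions $\psi^M_n$ whose nodes converge to the $q$-$r$-stationary points; establish the exponential estimate $\big|\big|\psi^M_n-\psi^l_n\big|\big|_\infty\leqslant C_1e^{-C_2(M-l)}$ of Proposition~\ref{Pr:aka3.1Prop}, which itself requires the asymptotic-unimodality and ratio bounds for (generalized) binomial coefficients in Lemma~\ref{Lm:genBinEstimates}; and finally extract a diagonal subsequence by compactness. So your outline shares with the paper the easy direction and the self-similarity heuristics, but is missing both the transfer mechanism (random-walk recurrence) and the quantitative estimate that actually produces a continuous limit.
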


For the Pascal adic limiting curves can be  described by nowhere differentiable functions, that generalizes Takagi curve.

\begin{Theorem} $($\emph{\cite{LodMin15}}, Theorem $1.)$ Let  $P$ be the Pascal adic transformation defined on the Lebesgue space  $(I, \mathcal{B},\mu_{q})$, $N\in\mathbb{N}$ and $g\in \mathscr{F}_N$ be a not cohomologous to a constant cylindric function. Then for  $\mu_{{q}}$-a.e. $x$ there is a stabilizing sequence $l_n(x)$ such that  the limiting function is $ \alpha_{g,x} \mathcal{T}^1_{q}$, where $ \alpha_{g,x}\in\{-1,1\} $, and  $\mathcal{T}^1_{q}$ is given by the identity \[\mathcal{T}^1_{q} (x) = \frac{\partial F_{\mu_q}}{\partial q}\circ F_{\mu_q}^{-1}(x),\,  x\in[0,1],\]
where $F_{\mu_q}$ is the distribution function\footnote{More precisely $F_{\mu_q}$ is distribution function of measure $\tilde{\mu}_q$, that is image of $\mu_q$ under canonical mapping $\phi:I\rightarrow[0,1]$, $\phi(x)=~\sum\limits_{i=1}^\infty\frac{x_i}{2^i}.$ } of ${\mu_q}.$
\label{Thm:TakagiOnly}
\end{Theorem}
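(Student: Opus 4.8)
The plan is to reduce the partial sums of $g$ to the tower profiles $F^g_{n,k}$ supplied by the convolution formula \eqref{eq:sumOfCylFunction}, to identify the common scaling shape of these profiles with $\mathcal{T}^1_q$ by exploiting the self-similarity of the Pascal diagram, and finally to produce a stabilizing sequence along the trajectory of a $\mu_q$-typical point. For $p(x)=1+x$ the diagram $B_p$ is the Pascal triangle, $\dim(n,k)=\binom{n}{k}$, and by Theorem~\ref{Th:SLPmeasures} the measure is $\mu_q=\prod(q,1-q)$; at level $n$ the point $x$ lies in the tower $\tau_{n,k_n(x)}$, where $k_n(x)$ counts the increments (the symbol of weight $q$) among $x_1,\dots,x_n$, so by the strong law of large numbers $k_n(x)/n\to q$. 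Every rung of $\tau_{n,k}$ is a cylinder of the same $\mu_q$-measure $q^{k}(1-q)^{n-k}$, so once the rungs are listed in $\num$-order the rung-fraction $t=j/\binom{n}{k}$ coincides with the measure-fraction inside the tower; moreover, under the canonical order this listing is the order by binary value $\sum x_i 2^{-i}$, which links $t$ to the distribution function $F_{\mu_q}$. Using \eqref{eq:sumOfCylFunction} with the fixed rank $N$ I would write the profile as the finite combination $F^g_{n,k}(j)=\sum_{l=0}^{N}h^g_{N,l}\,\partial^{N,l}_{n,k}(\omega)$, $j=\num(\omega)$, subtract its endpoint interpolant, and reduce the theorem to showing that the resulting centred profile, rescaled by its sup-norm, converges uniformly on $[0,1]$ to $\pm\mathcal{T}^1_q$.

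The heart of the argument is the identification of the limit shape through the self-similar structure of $B_p$ (Definition~\ref{def:selSimilarBratteli}). Pascal's recursion $\binom{n}{k}=\binom{n-1}{k-1}+\binom{n-1}{k}$ splits $\tau_{n,k}$, in $\num$-order, into an initial block of $\binom{n-1}{k-1}$ rungs followed by $\binom{n-1}{k}$ rungs, the split occurring at rung-fraction $\binom{n-1}{k-1}/\binom{n}{k}=k/n\to q$. This decomposition expresses the centred profile on $[0,q]$ and on $[q,1]$ through the centred profiles at level $n-1$ plus an affine correction, an approximate self-affine relation with ratio $q$. Passing to the limit, the rescaled profiles converge to the unique continuous solution, vanishing at the endpoints, of the corresponding de Rham-type functional equation; that solution is precisely $\mathcal{T}^1_q=\frac{\partial F_{\mu_q}}{\partial q}\circ F_{\mu_q}^{-1}$, since differentiating the self-affine equation satisfied by $F_{\mu_q}$ in the parameter $q$ produces exactly the same functional equation, while the factor $F_{\mu_q}^{-1}$ effects the passage from the measure-coordinate $t$ to the spatial coordinate. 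For $q=\tfrac12$ one has $F_{\mu_q}(s)=s$, so $\mathcal{T}^1_{1/2}$ is the classical Takagi function, recovering the Pascal case of \cite{DeLaRue}. Since every $\partial^{N,l}_{n,k}$ obeys the same recursion, the full combination tends to a scalar multiple of $\mathcal{T}^1_q$.

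It remains to build the stabilizing sequence, fix the sign, and use the hypothesis on $g$. For $\mu_q$-a.e.\ $x$ a recurrence argument shows that the relative position $\num(x|_n)/\binom{n}{k_n(x)}$ returns arbitrarily close to $0$ along an infinite set of levels $n_j$; along these levels the forward $P$-orbit of $x$ traverses an asymptotically complete tower, so with $l_j=\binom{n_j}{k_{n_j}(x)}$ the function $F^g_x(t\,l_j)$ agrees in the limit with the tower profile and $\varphi^g_{x,l_j}\to\alpha_{g,x}\mathcal{T}^1_q$, where $\alpha_{g,x}\in\{-1,1\}$ records the sign of the scalar prefactor together with the orientation with which $x$ enters the tower. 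Finally, by Theorem~\ref{Th:RBoundnessIsEquiToCohomolToConst} the normalizing sequence is unbounded exactly because $g$ is not cohomologous to a constant; this forces the scalar prefactor to be nonzero, so the limit is a genuine nonzero multiple of $\mathcal{T}^1_q$ and, after normalization, equals $\pm\mathcal{T}^1_q$, consistently with Theorem~\ref{Th:LimitingCurveNesCond}.

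The main obstacle will be to upgrade the approximate self-affine relation into genuine uniform convergence: one must show that the centred level-$n$ profiles form a Cauchy sequence in the sup-norm with limit the fixed point of the de Rham equation, controlling the affine corrections uniformly in $t\in[0,1]$, the region near $t=0,1$ (where the blocks become thin) being the delicate one. In parallel one must check that the bounded rank-$N$ data — the coefficients $h^g_{N,l}$ and the change of rank inside the recursion — influence only the scalar prefactor, hence $R_j$ and the sign, and not the shape, and that the subsequence $n_j$ furnished by the recurrence argument is compatible with the rate of convergence. Combining these is what pins the limit down to exactly $\mathcal{T}^1_q$ rather than to some merely Takagi-like function.
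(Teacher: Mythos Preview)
The paper does not contain a proof of this theorem: it is quoted verbatim from \cite{LodMin15} (Theorem~1 there), and the only related argument actually carried out in the present paper is Proposition~2, which treats a \emph{single} specific first-rank function $g$ for general polynomial systems by computing the limits of Lemma~\ref{Lm:triangArray} at a dense set of $q$-$r$-stationary points and then invoking the continuity Theorem~\ref{TpqIsCont}. The remark closing Section~3.5 indicates that the proof in \cite{LodMin15} for arbitrary $g\in\mathscr{F}_N$ proceeds through an expansion in the Walsh basis and the Krawtchouk generating function $(1-av)^{k}(1+v)^{n-k}$, $a=(1-q)/q$ --- a rather different, algebraic route from your de~Rham/self-affinity scheme.

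Your outline is coherent up to the sentence ``Since every $\partial^{N,l}_{n,k}$ obeys the same recursion, the full combination tends to a scalar multiple of $\mathcal{T}^1_q$.'' This is precisely the step that is special to the Pascal case and that your argument does not establish. Satisfying the same self-affine recursion only says that each limit lies in the \emph{space} of fixed points of the associated de~Rham operator; it does not force that space to be one-dimensional, nor does it prevent the centred combinations $\sum_l h^g_{N,l}\,\partial^{N,l}_{n,k}$ from picking up higher-order shapes. Indeed, the paper explicitly states (end of Section~3.5) that for general polynomial systems the limiting curves are observed to be linear combinations of $\mathcal{T}^k_{p,q}$ for several $k\geqslant 1$, and that a proof of the one-dimensionality is available \emph{only} for the Pascal adic, via \cite{LodMin15}. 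So the content of the theorem is exactly that, for $p(x)=1+x$, the higher $\mathcal{T}^k_q$ do not enter --- and your recursion argument, which would apply verbatim to any polynomial system, cannot by itself yield this. To close the gap you would need an additional Pascal-specific mechanism (e.g.\ the Krawtchouk/Walsh decomposition, or a direct asymptotic showing that the centred $\partial^{N,l}_{n,k}$ for different $l$ differ only by scalars up to $o(R_n)$) that collapses the possible shapes to the single function $\mathcal{T}^1_q$.
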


The graph of   $\frac12\mathcal{T}^1_{1/2}$ is the famous Takagi curve, see \cite{Takagi1903}.

For a function $g$ correlated with the  indicator functions of $i$-th  coordinate $\mathbbm{1}_{\{x_{i}=0\}}$,  $x=(x_j)_{j=1}^\infty\in X$ Theorem \ref{Thm:TakagiOnly} was proved in \cite{DeLaRue}.

\subsection{Combinatorics of finite paths in the polynomial adic systems}

In this section we'll specify representation \eqref{eq:sumOfCylFunction} for the  polynomial adic systems.

For a finite path $\omega=(\omega_1,\omega_2,\dots, \omega_n)$ we set  $k^1(\omega)$ equal to $nd-k(\omega).$ Using self-similarity of the diagram $B_p$ we can inductively prove the following explicit expression for  $\num(\omega)$:

\begin{proposition}
Index $\numC(\omega)$ of a finite path $\omega=(\omega_j)_{j=1}^n$ in lexicographically ordered set  $\pi_{n,k(\omega)}$ is defined by equality:
\begin{equation}\label{eq:Numformula}\numC(\omega)  = \sum_{j=2}^r \sum\limits_{i=0}^{\omega_{a_j}-1}C_P(a_j-1;k^1(\omega)-k^1(i)-m_j) +\numC(\omega_1),\end{equation}
where $m_j =\sum\limits_{t=j}^{r-1}k^1(\omega_{a_t}),2\leqslant j\leqslant r-1,$ $m_r=0$, and polynomial $P(x)$ is given by the identity $P(x) = x^dp(x^{-1})$.
\end{proposition}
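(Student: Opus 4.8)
The plan is to prove \eqref{eq:Numformula} by \emph{induction on the length} $n$ of the path $\omega$, peeling off the deepest edge at each step and invoking self-similarity (Definition~\ref{def:selSimilarBratteli}). Before starting, I would pass to the complementary coordinate $k^1=nd-k$ and the reversed polynomial $P(x)=x^dp(x^{-1})$. Since $(P(x))^n=x^{nd}(p(x^{-1}))^n$, the coefficient of $x^{\kappa}$ in $(P(x))^n$ equals $C_p(n,nd-\kappa)=\dim(n,k)$; thus $C_P(m,\kappa)$ is exactly the number of finite paths from the root to the vertex of complementary coordinate $\kappa$ at level $m$. In these coordinates the canonical order becomes transparent: a jump $+j$ in $k$ is a jump $+(d-j)$ in $k^1$, the coefficient of $x^{s}$ in $P$ counts the edges realizing a $k^1$-jump of size $s$, and the \textbf{larger} jumps in $k$ carry the \textbf{smaller} labels, so that in $k^1$-coordinates the label order is the natural one. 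I write $k^1(i)$ for the $k^1$-jump attached to an edge-label $i\in\mathcal{A}$.

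The second ingredient is the direction of the adic order. From Definition~\ref{def:adicSystems} and the successor rule (illustrated for the Pascal case by~\eqref{eq:VershikDef}, where only an initial segment of coordinates is changed) the comparison of two paths in the same tower $\pi_{n,k(\omega)}$ is decided at the \emph{deepest} level where they differ; equivalently $\omega_1$ is the least significant edge. Hence $\num(\omega)=1+\#\{\eta\preceq\omega,\ \eta\neq\omega\}$ splits according to the largest level $m$ at which $\eta$ departs from $\omega$: above level $m$ the path $\eta$ coincides with $\omega$, which fixes the vertex $(m,\kappa_m)$ with $\kappa_m=\sum_{t\leqslant m}k^1(\omega_t)$; at level $m$ it uses an edge with label $i<\omega_m$; and below level $m$ it is free. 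Each smaller edge with jump $k^1(i)$ emanates from the level-$(m-1)$ vertex of complementary coordinate $\kappa_m-k^1(i)$, which can be completed downward in exactly $C_P(m-1,\kappa_m-k^1(i))$ ways. Summing over $i<\omega_m$ produces the inner sum of the asserted formula, and rewriting $\kappa_m=k^1(\omega)-m_j$ with $m_j=\sum_{t>m}k^1(\omega_t)$ produces the offsets $m_j$ (note $m_r=0$ for the deepest level).

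I would organize this as the induction proper. Fixing the deepest edge $\omega_n$, the paths $\preceq\omega$ are either (a) those with a strictly smaller edge at level $n$, counted by the single term $\sum_{i<\omega_n}C_P(n-1,\kappa_n-k^1(i))$, or (b) those with $\eta_n=\omega_n$ whose truncation $(\omega_1,\dots,\omega_{n-1})$ is $\preceq$-dominated inside the sub-tower ending at the level-$(n-1)$ vertex. Self-similarity guarantees that this sub-tower is an order-isomorphic copy of a tower of the same diagram, so the induction hypothesis evaluates case (b) as $\num(\omega_1,\dots,\omega_{n-1})$. Unrolling the recursion down to the base case, where $C_P(0,\cdot)$ acts as a Kronecker delta and the lowest-order term collapses to the rank $\num(\omega_1)$ of the single edge within its jump group, yields the closed form $\num(\omega)=\num(\omega_1)+\sum_{m=2}^{n}\sum_{i=0}^{\omega_m-1}C_P(m-1,\kappa_m-k^1(i))$, which is \eqref{eq:Numformula}.

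The conceptual steps — the self-similar decomposition and the identification of downward completions with generalized binomial coefficients — are routine. I expect the real work to be the \textbf{index bookkeeping}: verifying that ``label $i<\omega_m$'' in the canonical order corresponds exactly to the source coordinate $\kappa_m-k^1(i)$, so that edges of every smaller jump together with the smaller edges of the same jump are all, and only, counted once; and pinning down the cumulative offsets $m_j$ and the correct ranges of summation, where off-by-one errors from the level-indexing convention (whether paths are numbered from $0$ or from $1$) are easy to make. Care is also needed at the boundary coordinates $k^1\in\{0,\dots,md\}$ to ensure that out-of-range $C_P$ terms vanish, consistent with the recursion $C_p(n,k)=\sum_{j=0}^{d}a_jC_p(n-1,k-j)$.
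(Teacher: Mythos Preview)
Your approach is correct and coincides with the paper's, which merely states ``Using self-similarity of the diagram $B_p$ we can inductively prove the following explicit expression'' without further detail. Your write-up supplies exactly the missing argument—peeling off the deepest edge, counting the free completions below via $C_P$, and invoking self-similarity for the recursive step—so there is nothing to correct beyond the bookkeeping caveats you already flagged.
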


\textbf{Remark}.
If initial segment $(\omega_1,\omega_2,\dots,\omega_N)$ of $\omega\in \pi_{n,k}$ is a maximal path to some vertex $(N,l)$, then \eqref{eq:Numformula} can be rewritten as follows:
\begin{equation}\label{eq:Numformula2}\text{Num}(\omega)  = \sum_{j=N+1}^r \sum\limits_{i=0}^{\omega_{a_j}-1}C_P(a_j-1;k^1(\omega)-k^1(i)-m_j) +C_P(a_N;k^1(\omega)-m_l) .\end{equation}

Let $N$ and $l,\, 0\leqslant l\leqslant Nd,$ be positive integers and $\omega\in\pi_{n,k}$ be a finite path. Function $\partial_{k^1}^{N,l}: \mathbb{Z}_+\rightarrow\mathbb{Z}_+ , k^1=nd-k,$ is defined by the identity

\begin{equation}\label{eq:PartialGeneralSLP}\partial_{k^1}^{N,l}\omega =  \sum_{j=N+1}^r \sum\limits_{i=0}^{\omega_{a_j}-1}C_P(a_j-1-N;k^1-k^1(i)-m_j-l),\end{equation}
where positive integers  $a_j,k^1(i),m_j,$ are defined as in~\eqref{eq:Numformula2}.
Parameters $N$  and $l$ correspond to shifting the origin vertex $(0,0)$ to the vertex $(N,l)$. Therefore value of the function  $\partial_{k^1}^{N,l}\omega,k^1=k^1(\omega),$  equals to the number of paths from the vertex $(0,0)$ going through the vertex $(N,l)$ to the vertex  $(n,k),k=nd-k^1$, and non-exceding path $\omega$ divided by $\dim(N,l)$.

Let $K_{M,n,k}$, $1\leqslant M\leqslant n$,   denote indexes (in lexicographical order) of those paths $\omega=(\omega_1,\dots, \omega_n)\in\pi_{n,k}$, such that their initial segment $(\omega_1,\omega_2,\dots,\omega_M)$ is maximal (as a path from $(0,0)$ to some vertex $(M,l)$).

Let  $g\in \mathscr{F}_N$. Function $\tilde{F}^{g,M}_{n,k}: K_{M,n,k}\rightarrow \mathbb{R}$ (where $M, N\leqslant M\leqslant n$ is a positive integer) is defined by the identity
\begin{equation}
\label{eq:PartialSumFormula}
\tilde{F}_{n,k}^{g,M}(j) =\sum\limits_{l=0}^{Nd}h_{M,l}^g  \partial_{nd-k}^{M,l}\, \omega,\end{equation}
where $\num(\omega)=j$, $j\in K_{M,n,k}$, $\omega\in \pi_{n,k}$.
We  extend domain of the function $\tilde{F}_{n,k}^{g,M}$  to the whole interval $[1,H_{n,k}]$ using linear interpolation. Expression~\eqref{eq:sumOfCylFunction} implies that for  $j\in K_{M,n,k}$ the identity $\tilde{F}^{g,M}_{n,k}(j)=F^g_{n,k}(j)$ holds. Non strictly speaking,
higher values of parameter $M ,M>N,$ makes functions $\tilde{F}_{n,k}^{g,M}$ to be more and more rough approximation of function $F^g_{n,k}$ and points from $K_{M,n,k}$ correspond to nodes of this approximation.

\begin{lemma}Let  $1\leqslant j\leqslant H_{n,k}$ and $g\in \mathscr{F}_N$. There exists a constant $C=C(g)$, such that the following inequality holds for all $n,k$: \[|\tilde{F}^{g,N}_{n,k}(j)-F_{n,k}^g(j)|\leqslant C.\]
\label{Lm:tildeF}
\end{lemma}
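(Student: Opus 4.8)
The plan is to recognize $\tilde{F}^{g,N}_{n,k}$ as the piecewise-linear interpolation of the true partial sum $F^g_{n,k}$ through the ``milestone'' indices $K_{N,n,k}$, and then to control the deviation on each interpolation interval by showing that these intervals have uniformly bounded length. Indeed, by the identity recorded just after \eqref{eq:PartialSumFormula}, $\tilde{F}^{g,N}_{n,k}(j)=F^g_{n,k}(j)$ for every $j\in K_{N,n,k}$, and since $\tilde{F}^{g,N}_{n,k}$ is defined off $K_{N,n,k}$ by linear interpolation, it is exactly the piecewise-linear interpolant of $F^g_{n,k}$ at the nodes $K_{N,n,k}$. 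Hence on any interval $[j_1,j_2]$ bounded by two consecutive nodes $j_1<j_2$ the two functions agree at the endpoints, and it suffices to estimate $|F^g_{n,k}-\tilde{F}^{g,N}_{n,k}|$ in the interior.

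The crux is to bound the gap $j_2-j_1$ uniformly in $n,k$. An index $j$ lies in $K_{N,n,k}$ precisely when the $j$-th rung $\omega\in\pi_{n,k}$ has maximal initial segment $(\omega_1,\dots,\omega_N)$, i.e. a maximal path to some vertex $(N,l)$. Starting from such an $\omega$, the adic successor must carry past level $N$: the increment occurs at some level $m>N$, the coordinates at levels $1,\dots,m-1$ are reset to their minimal values, so that the initial segment becomes the \emph{minimal} path to a new vertex $(N,l')$, while the coordinates at levels $>N$ remain frozen during the subsequent steps. Consequently, as $j$ runs from $j_1+1$ to the next node $j_2$, the initial segments run exactly once through all paths to $(N,l')$, whence $j_2-j_1=\dim(N,l')=C_p(N,l')$. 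Since $C_p(N,l)$ is the coefficient of $x^l$ in $p(x)^N$, we obtain $j_2-j_1\le D_N:=\max_{0\le l\le Nd}C_p(N,l)$, a constant depending only on $p$ and $N$, not on $n,k$. (The maximal path gives $H_{n,k}\in K_{N,n,k}$, and the initial piece $[1,\min K_{N,n,k}]$ together with the frozen levels is treated identically.)

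Finally I would carry out the elementary deviation estimate. Put $M:=\max_{x\in X}|g(x)|$, which is finite since $g\in\mathscr{F}_N$ takes only finitely many values. The increments of $F^g_{n,k}$ are bounded by $M$, so for $j\in[j_1,j_2]$ one has $|F^g_{n,k}(j)-F^g_{n,k}(j_1)|\le (j-j_1)M\le D_N M$; the same bound holds for the chord $\tilde{F}^{g,N}_{n,k}$ because $|F^g_{n,k}(j_2)-F^g_{n,k}(j_1)|\le D_N M$ and the interpolation coefficient $(j-j_1)/(j_2-j_1)$ lies in $[0,1]$. Subtracting and using that both functions are linear between consecutive integers (so the maximum over the interval is attained at an integer point) gives $|F^g_{n,k}(j)-\tilde{F}^{g,N}_{n,k}(j)|\le 2D_N M$ throughout the interval. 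Taking $C:=2\,\big(\max_{0\le l\le Nd}C_p(N,l)\big)\max_{x}|g(x)|$, which is independent of $n$ and $k$, yields the claim.

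I expect the only genuine obstacle to be the combinatorial step of the second paragraph: making the carry-and-reset description of the adic successor precise enough to conclude that consecutive nodes of $K_{N,n,k}$ are exactly $\dim(N,l')$ apart, and that levels above $N$ stay frozen in between. Once that is established, the uniform bound on the gap is immediate from $\dim(N,l)=C_p(N,l)$ being a fixed generalized binomial coefficient, and the remaining estimate is routine bookkeeping.
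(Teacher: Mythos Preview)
The paper states Lemma~\ref{Lm:tildeF} without proof, so there is no argument in the paper to compare yours against; your proposal supplies precisely the natural proof and is correct. The key observation---that consecutive indices of $K_{N,n,k}$ differ by exactly $\dim(N,l')=C_p(N,l')\le D_N:=\max_{0\le l\le Nd}C_p(N,l)$, because once the initial $N$-segment is maximal the adic carry resets it to the minimal path into some vertex $(N,l')$ and the subsequent $\dim(N,l')$ iterates cycle through all paths to $(N,l')$ while the tail above level $N$ stays frozen---is sound, and the remaining chord-versus-function estimate is routine. The left-endpoint issue you flag is indeed harmless: the minimal path in $\pi_{n,k}$ has minimal initial $N$-segment to some $(N,l_0)$, so $\min K_{N,n,k}=\dim(N,l_0)\le D_N$, and the bound on $[1,\min K_{N,n,k}]$ follows by anchoring at $F^g_{n,k}(0)=0$.
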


\textbf{Remark.} If the function $g$ equals $1$ and $\num(\omega)=\dim(n,k)\equiv C_P(n,k^1(\omega)),$ then expression \eqref{eq:PartialSumFormula} (as well as \eqref{eq:sumOfCylFunction}) reduces to:
\[C_P(n,k) = \sum\limits_{l=0}^{Nd}C_P(N,l)C_P(n-N,k-l), \]
that is Vandermonde's convolution formula for generalized binomial coefficients.

\subsection{A generalized  $r$-adic number system on $[0,1]$}

Let parameter $q\in(0,1/a_0)$ and number $t_q\in(0,1/a_1)$ be defined as in Theorem \ref{Th:SLPmeasures}. We denote by $r=p(1)$ number of letters in the alphabet $\mathcal{A}$.

Let  $\omega=(\omega_i)_{i=1}^\infty\in X_p,\omega_i\in \mathcal{A},$ be an infinite path. It is also natural to consider $\omega $ as a path in  an infinite  perfectly balanced tree $\mathcal{M}_r.$

By $\bar{s}_n = (s_{n}^{0},\dots{s_{n}^{r-1}})^T$ we denote  $r$-dimensional vector with $j$-th, $0\leqslant j\leqslant r-1$, component equal to number of occurrences of letter  $j$ among $(\omega_1,\omega_2,\dots,\omega_n).$ Let $\bar{a}_i,0\leqslant i\leqslant r-1,$ denote $r$-dimensional vector \[(\underbrace{0,0,\dots,0}_{\sum_{j=0}^{i-1}a_j},\underbrace{1,1,\dots,1}_{a_i},\underbrace{0,0,\dots,0}_{\sum_{j=i+1}^{d}a_j}),\]


Let $u\cdot v$ denote scalar product of $r$-dimensional vectors $u$ and $v$. We define mapping $\theta_q: X \rightarrow [0,1]$ by the following identity:
\begin{equation}\label{eq:generalizednadic(q)}x = \sum\limits_{j=1}^\infty I_q(\omega_j)q^j\Big(\frac{t_q}{q}\Big)^{\bar{a}_1\cdot\bar{s}_j+2\bar{a}_2\cdot\bar{s}_j+\dots+d\,\bar{a}_d\cdot\bar{s}_j},\end{equation}
where $I_q(w) = a_0\frac{q^{h+1}}{t_q^{h+1}}+a_1\frac{t_qq^h}{t_q^{h+1}}+\dots+a_h\frac{q}{t_q}+s$ with $w=\sum\limits_{i=0}^ha_i+s, 0\leqslant s<a_{h+1},0\leqslant h<d$.

Let  $X_0$ denote the set of stationary paths.
Function $\theta_{1/r}$  is a canonical bijection $\phi=\theta_{1/r}: X\setminus X_0\rightarrow[0,1]\setminus G,\, G = \phi(X_0).$  Function $\phi$ maps measure $\mu_q,q\in(0,1),$ defined on $X$ to measure $\tilde{\mu}_q$ on  $[0,1]$, the family of towers $\{\tau_{n,k}\}_{k=0}^{nd}$ to the family $\{\tilde{\tau}_{n,k}\}_{k=0}^{nd}$ of disjunctive intervals. That defines isomorphic realization  $\tilde{T}_p$ on  $[0,1]\setminus G$ of polynomial adic transformation $T_p$.

As shown by A.~M.~Vershik any adic transformation has a cutting and stacking realization on the subset of a full measure of $[0,1]$ interval. However, nice explicit expression   \eqref{eq:generalizednadic(q)} needs some regularity from the Bratteli diagram.

Conversely, any point  $x\in[0,1]$ could be represented by series \eqref{eq:generalizednadic(q)}. We call this representation $q$-$r$-adic representation associated to the polynomial $p(x)$. (If  $r=2$  representation \eqref{eq:generalizednadic(q)} for  $q=1/2$ is a usual dyadic representation of $x\in(0,1)$.) Let $G_q^m$ denote the set (vector) of all stationary numbers of rang $m, m\in\mathbb{N}$, i.e. numbers with a finite representation
\[x = \sum\limits_{j=1}^m I(\omega_j)\prod\limits_{i=0}^{r-1}p_i^{s_j^i},\]
and let  $G_q=\cup_m G_q^m $ be the set of all $q$-$r$-stationary numbers.

Let $l\in\mathbb{N}$ and $x$ be a path in $X_p.$ We consider  $r^l$-dimensional vectors $\tilde{K}_n={K}_{n-l,n,k_n(x)}$   and renormalization mappings $D_{n,k}:[1, C_p(n,k)]\rightarrow[0,1]$ defined by $D_{n,k}(j)=\frac{j}{C_p(n,k)}.$ Using ergodic theorem it is straightforward to show that for $\mu_q$-a.e. $x$  it holds $\lim\limits_{n\rightarrow\infty}R_{n,k_n(x)}(\tilde{K}_n)=G^l_q$ (where convergence is the componentwise convergence of vectors).

\subsection{Existence of limiting curves for polynomial adic systems}

In this part we generalize Theorem $2.4$ from \cite{DeLaRue} for polynomial adic systems~$(X_p,T_p)$ associated with  positive integer polynomial $p$.

First we prove a combinatorial variant of the theorem. Let, as above, $x\in X_p$ be an infinite path going through vertices  $(n,k_n(x))\in B_p$. Below we write vertex $(n,k_n(x))$ as $(n,k_n)$ or simply as $(n,k).$ To simplify notation, the dimension  $\dim(n,k)=C_p(n,k)$ is  denoted  by $H_{n,k}.$

We define function  ${\varphi}_{n,k}^g=\varphi_{x\in\tau_{n,k}(1),H_{n,k}}^g:[0,1]\rightarrow \mathbb{R}$ by identity
\[\varphi_{n,k}^g(t) = \frac{{F}_{n,k}^g(tH_{n,k})-t{F}_{n,k}^g(H_{n,k})}{R_{n,k}^g}.\]
Let $F$ be a function defined on $[1,H_{n,k}]$. Define function $\psi_{F,n,k}$ on $[0,1]$ by
\[\psi_{F}(t) = \frac{{F}(tH_{n,k})-t{F}(H_{n,k})}{R_{n,k}},\]
where  $R_{n,k}$ is a canonically defined normalization coefficient. Then the following identity holds $\psi_{F_{n,k}^g}=\varphi_{n,k}^g. $

Let  $g\in \mathscr{F}_N$ be not cohomologous to a constant cylindric function.
Theorem \ref{Th:RBoundnessIsEquiToCohomolToConst} implies that normalization sequence $\big(R_{n,k_n}^g\big)_{n\geqslant1}$ monotonically increases. Lemma \ref{Lm:tildeF} shows that
\[\big|\big| \psi_{F_{n,k}^g} - \psi_{F_{n,k}^{g,N}}\big|\big|_\infty\xrightarrow[n\rightarrow\infty]{}0.\]

We want to show that there is a sequence $(n_j)_{j\geqslant 1}$ and a continuous function  $\varphi(t),t\in[0,1],$  such that
\[\lim_{j\rightarrow\infty}\big|\big| \psi_{F_{n_j,k_{n_j}}^{g,N}} - \varphi\big|\big|_\infty=0.\]

Following \cite{DeLaRue}, we consider an auxiliary object: a family of polygonal functions  $\psi^M_n = \psi_{F_{n,k}^{g,n-M+N}},$ $N+1\leqslant M\leqslant n. $ Graph of each function $\psi^M_n$ is defined by $(2r)^M$-dimensional array $(x^M_i(n),y^M_i(n))_{i=1}^{r^M}$, such that  $\psi^M_n(x^M_i(n))=y^M_i(n)$. Results from Section $3.3$ show that vector $(x^M_i(n))_{i=1}^{r^M}$ converges pointwise to $q$-$r$-stationary numbers $G^M_q$ of rank $M,$ given by polynomial $p(x).$

Let $l$ and $M$ be positive integers, such that $N+1\leqslant l<M<n$. Functions $F_{n,k}^{g,n-M}$ and $F_{n,k}^{g,n-l}$ coincide at each point from  ${K}_{n-l,n,k_n}$, therefore functions  $\psi^M_n$ and $\psi^l_n$ also coincide at $(x^l_i(n))_{i=1}^{r^l}.$ Moreover, Proposition \ref{Pr:aka3.1Prop} (it generalizes Proposition $3.1$ from \cite{DeLaRue}) provides the following estimate:
\[\big|\big| \psi^M_{n_j} -\psi^l_{n_j} \big|\big|_\infty\leqslant C_1e^{-C_2(M-l)},\]
with $C_1,C_2>0.$ For a fixed  $M$ we can extract a subsequence $(n_j)$  such that polygonal functions $\psi^M_{n_j}$ converge to a polygonal function $\varphi^M$ in sup-metric. Then, as in \cite{DeLaRue},  using a standard diagonalization  procedure we can find subsequence (that again will be denoted by $(n_j)_j$) such that convergence to some continuous on $[0,1]$ function holds for any $M$: \[\lim\limits_{M\rightarrow\infty} \limsup\limits_{j\rightarrow\infty}\big|\big| \psi_{n_j}^{M} - \varphi\big|\big|_\infty=0.\]
Auxiliary functions  $\varphi^M$ are polygonal approximations to the function $\varphi.$

Therefore we have proved the following claim, generalizing Theorem \ref{Th:ExistanceOfLimitShapePN}:
\begin{Theorem}
\label{Th:ExistanceOfLimitShapeSLP} Let   $(X,T,\mu_q ),q\in(0,\frac{1}{a_0}),$  be a polynomial adic transformation defined on Lebesgue  probability space $(I, \mathcal{B},\mu_{q}),q\in(0,1),$  and $g$  be a not cohomologous to a constant cylindric function from $\mathscr{F}_N$. Then for $\mu_q$-a.e. $x$ passing through vertices $(n,k_n(x))$ we can extract a subsequence $(n_j)$ such that  $\varphi_{n_j,k_{n_j}(x)}^g$  converges in $\sup$-metric to a continuous function on $[0,1]$.
\end{Theorem}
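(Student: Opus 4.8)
The plan is to reduce the statement to a purely combinatorial convergence assertion about the coarse approximations $\tilde F^{g,N}_{n,k}$, and then to realise $\varphi$ as the uniform limit of a nested family of polygonal functions whose successive refinements contract geometrically.

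First I would record that, since $g$ is not cohomologous to a constant, Theorem \ref{Th:RBoundnessIsEquiToCohomolToConst} guarantees that the normalizing sequence $R^g_{n,k_n}$ is unbounded; along the trajectory of $x$ it in fact grows monotonically, so $1/R^g_{n,k_n}\to 0$. Lemma \ref{Lm:tildeF} bounds $|F^g_{n,k}(j)-\tilde F^{g,N}_{n,k}(j)|$ by a constant $C=C(g)$ uniformly in $j,n,k$; dividing by $R^g_{n,k_n}$ yields $\|\psi_{F^g_{n,k}}-\psi_{F^{g,N}_{n,k}}\|_\infty\to 0$. Hence it suffices to produce a subsequence $(n_j)$ along which $\psi_{F^{g,N}_{n_j,k_{n_j}}}$ converges uniformly to a continuous $\varphi$, since then $\varphi^g_{n_j,k_{n_j}}=\psi_{F^g_{n_j,k_{n_j}}}$ converges to the same limit.

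Second, following the scheme of \cite{DeLaRue}, I would interpolate between the fine object $\psi_{F^{g,N}}$ and progressively coarser ones through the family $\psi^M_n=\psi_{F^{g,n-M+N}_{n,k}}$, $N+1\le M\le n$. Each $\psi^M_n$ is the piecewise-linear function with nodes at the abscissae indexed by $K_{n-M+N,n,k}$, i.e.\ at the paths whose initial segment is maximal; there are $r^M$ such nodes, and the node sets are nested in $M$, so $\psi^M_n$ and $\psi^l_n$ ($l<M$) agree at every node of the coarser one. By the computations of Section 3.3 the $x$-coordinates of these nodes converge, as $n\to\infty$, to the $q$-$r$-stationary numbers $G^M_q$ determined by $\mu_q$ and $p$. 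The decisive ingredient is Proposition \ref{Pr:aka3.1Prop}, which provides the geometric contraction
\[\big\|\psi^M_n-\psi^l_n\big\|_\infty\le C_1\, e^{-C_2(M-l)},\qquad C_1,C_2>0,\]
uniformly in $n$: refining the approximation beyond level $l$ perturbs the graph only by an exponentially small amount.

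Finally, for each fixed $M$ the functions $\psi^M_n$ lie in a finite-dimensional family parametrised by the $r^M$ node coordinates, whose positions converge; by compactness I can extract a subsequence along which $\psi^M_{n_j}\to\varphi^M$ uniformly, and a standard diagonal argument yields a single subsequence, again written $(n_j)$, working simultaneously for all $M$. The geometric estimate passes to the limit, so $(\varphi^M)_M$ is Cauchy in the sup-norm and converges uniformly to a continuous $\varphi$ with $\lim_M\limsup_j\|\psi^M_{n_j}-\varphi\|_\infty=0$; combined with the first step this gives $\varphi^g_{n_j,k_{n_j}}\to\varphi$. The main obstacle is Proposition \ref{Pr:aka3.1Prop}: establishing the exponential contraction requires exploiting the self-similarity of $B_p$ together with sharp estimates on the generalized binomial coefficients $C_p(n,k)$ (equivalently, on the concentration of $\mu_q$), in order to show that the relative contribution of the finer levels to the renormalized partial sums decays geometrically against the growth of $R^g_{n,k_n}$.
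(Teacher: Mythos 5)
Your proposal is correct and follows essentially the same route as the paper's own argument: the same reduction via Theorem~\ref{Th:RBoundnessIsEquiToCohomolToConst} (unbounded, monotone $R^g_{n,k_n}$) and Lemma~\ref{Lm:tildeF}, the same family of polygonal functions $\psi^M_n=\psi_{F^{g,n-M+N}_{n,k}}$ with nodes converging to the $q$-$r$-stationary points $G^M_q$, the exponential contraction supplied by Proposition~\ref{Pr:aka3.1Prop}, and the same compactness-plus-diagonalization extraction of a continuous uniform limit. No gaps beyond what the paper itself defers to Proposition~\ref{Pr:aka3.1Prop} and Lemma~\ref{Lm:genBinEstimates}.
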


Each limiting curve $\varphi$ is a limit in  $j$ of polygonal curves $\psi_{n_j}^m,m\geqslant 1,$ with nodes at stationary points  $G_q\subset[0,1]$. Therefore, its values
$\varphi(t)$ can be obtained as limits $ \lim\limits_{j\rightarrow\infty}\psi_{n_j}^m\big(\frac{\num(\omega)}{H_{n_j,k_{n_j}}}\big),$ where $t\in G^{m}_q$ and $\lim\limits_{j\rightarrow\infty}\frac{\num(\omega)}{H_{n_j,k_{n_j}}} = t,$ with $\num(\omega)\in K_{n_j-m,n_j,k_{n_j}}.$

Self-similar structure of towers simplifies this task. We write simply $n$ for $n_j(x)$, $F$ for $F_{n,k}^g$ and $R_n= R_{n,k}^g$.  The  following lemma in fact generalizes results from Section 3.1. of \cite{DeLaRue}:

\begin{lemma}Limiting curve is totally defined by the following limits $n\rightarrow\infty$:
\[\lim_{n\rightarrow\infty }\frac{1}{R_n}\Big(F(L_{m,i,n,k}) - \frac{L_{m,i,n,k}}{H_{n,k}}F(H_{n,k})\Big),\]
where  $L_{m,i,n,k}=\sum \limits_{j=0}^{m}a_jH_{n-i,k(\omega)+j-di},0\leqslant m\leqslant d.$
\label{Lm:triangArray}
\end{lemma}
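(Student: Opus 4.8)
The plan is to combine the two properties already established for the limiting curve $\varphi$: it is continuous, and it is the uniform limit along the subsequence $(n_j)$ of the polygonal functions $\psi_n^m$ whose breakpoint abscissae converge to the stationary set $G_q=\bigcup_m G_q^m$, which Section 3.3 shows to be dense in $[0,1]$. Continuity together with density means that $\varphi$ is completely determined by the values $\varphi(t)$, $t\in G_q$, and, by the remark preceding the statement, each such value is
\[\varphi(t)=\lim_{n\to\infty}\frac{1}{R_n}\Big(F(\num(\omega))-\frac{\num(\omega)}{H_{n,k}}F(H_{n,k})\Big),\]
taken along breakpoints $\omega$ (paths whose initial segment is maximal up to some level $n-i$) with $\num(\omega)/H_{n,k}\to t$. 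Thus it suffices to show that every breakpoint ordinate is, in the limit, a fixed finite combination of the elementary limits attached to the indices $L_{m,i,n,k}$, $0\le m\le d$.

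First I would read the indices $L_{m,i,n,k}$ off the self-similar decomposition of the tower $\tau_{n,k}$. Iterating the recursion $H_{n,k}=\sum_{j=0}^d a_j H_{n-1,k-j}$ down to level $n-i$ and invoking self-similarity (Definition \ref{def:selSimilarBratteli}), the tower $\tau_{n,k}$ splits, in the order fixed by the canonical labelling, into consecutive blocks, each an order-isomorphic copy of a lower tower $\tau_{n-i,k'}$. The cumulative number of rungs below the first-level breakpoints separating the initial groups of these depth-$i$ blocks is exactly $L_{m,i,n,k}=\sum_{j=0}^m a_j H_{n-i,k(\omega)+j-di}$; in the coordinate $k^1=nd-k$ this reads $\sum_{j=0}^m a_j C_P(n-i,k^1-j)$, which is the form dictated by the Num-formula \eqref{eq:Numformula2} and the reversed polynomial $P(x)=x^dp(x^{-1})$. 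Hence the $L_{m,i,n,k}$ are precisely the division points that survive in the limit as nodes of the recursive structure of $\varphi$.

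The core is then a telescoping reduction. Because the partial-sum function $F=F_{n,k}^g$ is additive over consecutive rungs, its increment across a whole block equals the total of $g$ over that block, and, by self-similarity together with Lemma \ref{Lm:tildeF}, the normalized profile of $F$ inside a depth-$i$ block is an affinely rescaled copy of the analogous object one scale lower. Writing the tail of a breakpoint coordinate by coordinate, I would expand $F(\num(\omega))-\frac{\num(\omega)}{H_{n,k}}F(H_{n,k})$ as a sum over $i$ in which the $i$-th term is the contribution of the $i$-th tail coordinate; each such contribution is, after division by $R_n$ and up to an error vanishing as $n\to\infty$, one of the elementary differences $\frac{1}{R_n}\big(F(L_{m',i,n,k})-\frac{L_{m',i,n,k}}{H_{n,k}}F(H_{n,k})\big)$. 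Passing to the limit along $(n_j)$ and using the uniform exponential estimate $\|\psi^M_{n_j}-\psi^l_{n_j}\|_\infty\le C_1e^{-C_2(M-l)}$ from Proposition \ref{Pr:aka3.1Prop} to control the tail of the expansion then shows that every breakpoint ordinate, and hence $\varphi$, is determined by the family of $L_{m,i,n,k}$-limits.

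The hard part will be the combinatorial bookkeeping in the middle two steps. One has to verify that the $\num$-value of an arbitrary breakpoint decomposes \emph{exactly} into the cumulative boundaries $L_{m,i,n,k}$ prescribed by the canonical order — this is where the coordinate change $k^1=nd-k$, the reversed polynomial $P$, and the explicit formula \eqref{eq:Numformula2} must be handled carefully, since the canonical labelling attaches the smallest edge labels to the largest coordinate increment — and that the within-block rescalings are all compatible with the single global normalizer $R_n$ rather than with block-dependent ones. Controlling this last point, namely that $R_n$ does not collapse relative to the block contributions, is exactly where the monotone unbounded growth of $R_{n,k_n}^g$ (Theorems \ref{Th:LimitingCurveNesCond} and \ref{Th:RBoundnessIsEquiToCohomolToConst}) and the exponential gap estimate are indispensable.
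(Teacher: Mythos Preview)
Your proposal is essentially correct and rests on the same structural observation as the paper --- the self-similar block decomposition of the tower $\tau_{n,k}$ --- but the paper's execution is more direct and avoids some of the machinery you invoke. Rather than passing first to the limiting curve $\varphi$, appealing to density of $G_q$, and then telescoping over tail coordinates with Proposition~\ref{Pr:aka3.1Prop} brought in to control the tail, the paper works entirely at the level of the finite polygonal approximant $\psi_n^M$. It observes that the breakpoint data $(x_{m,j},y_{m,j})$, $0\le j\le md$, satisfy the linear recursion $x_{m-1,i}=\sum_{j=0}^d a_j x_{m,i-j}$ and $y_{m-1,i}=\sum_{j=0}^d a_j y_{m,i-j}$, inherited from the generalized Pascal identity $H_{n,k}=\sum_j a_j H_{n-1,k-j}$. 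Consequently, at each depth $m$ only the $d$ ``new'' nodes with $(m-1)d<j\le md$ carry fresh information, and these are precisely the $L_{m,i,n,k}$. This reduces the claim to a finite linear-algebra statement about $\psi_n^M$ for each fixed $M$; passing to the limit in $n$ is then immediate.

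Your telescoping would, once written out, unwind to this same recursion, so the content is the same. The one superfluous step is your appeal to the exponential estimate: for a point of $G_q^m$ the expansion over tail coordinates is finite (of length $m$), so there is no tail to control. The exponential bound was used \emph{before} the lemma to guarantee that $\psi_n^M\to\varphi$ as $M\to\infty$; inside the lemma itself it plays no role.
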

\begin{proof} We may assume that  $\delta<\frac{k_n}{n(x)}<\delta d$ for some $\delta>0$.
First, we suppose that some typical   $n=n(x)>>m$ and $k_n$ are taken.
We consider a set of ingoing finite paths of length $m$ to the vertex $(n,k), d\leq k\leq d(n-1), n>>m$. Self-similarity of $B_p$ implies that these paths can be considered as paths going from the origin to some vertex $(m,j), 0\leq j\leq md,$ of $B_p$, see Fig. \ref{Fig:TriangularArray}. As shown in Section~$3.3$ above, each such path correspond to a point from $G_q^m,$ which in its turn correspond to $q$-$r$-adic interval of rank $m$. Let  $x_{m,j}, 0\leqslant j\leqslant md$ denote length of such interval and $y_{m,j}$,  denote increment of the function $\psi_{n}^M$ on the interval  $(m,j)$. Values $ (x_{m,j}, y_{m,j})$ may be defined inductively: For  $m=0$ by $x_{0,0}=1, y_{0,0}=0$, and for $m>0 $  and indices  $j$ such that $(m-1)d<j\leqslant md$ by $x_{m,j}=\sum\limits_{i=0}^j\frac{ a_iC_p(n-m,nd-k+j-di)}{H_{n,k}},\, y_{m,j} =\psi_{n}^M(x_{m,j}) $; for other values of $j$ by recursive expression $x_{m-1,i}=\sum_{j=0}^d a_jx_{m,i-j},$ $y_{m-1,i}=\sum_{j=0}^d a_jy_{m,i-j}.$ Therefore function $\psi_{n_j}^M$ is totally defined by its values at $x_{m,j}$, $1\leqslant m\leqslant M,$ $(m-1)d<j\leqslant md$. Going to the limit we obtain the claim.
 \end{proof}

\begin{figure}[h]
                \centering
                \includegraphics[scale=0.3]{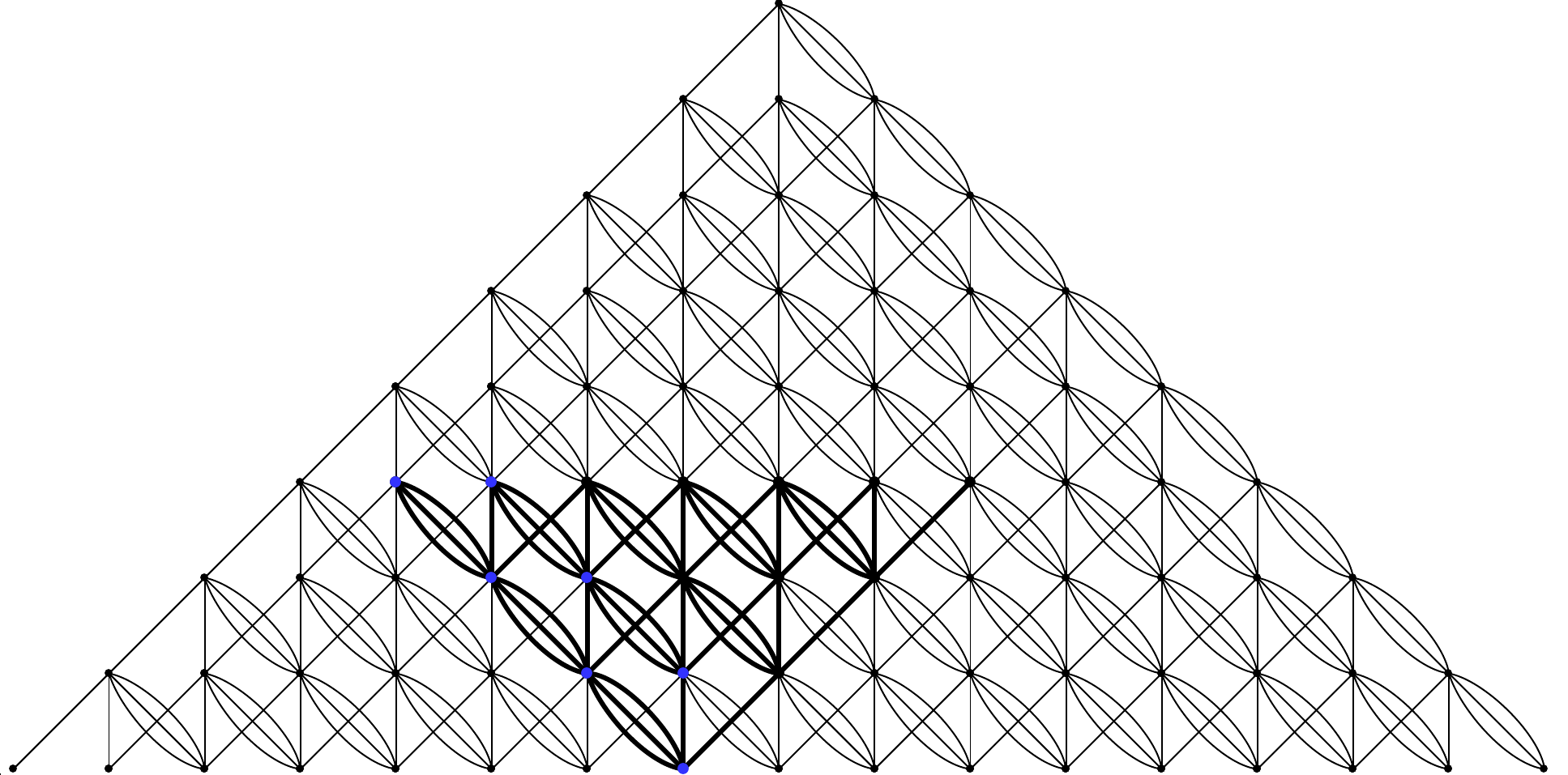}
                \includegraphics[scale=0.6]{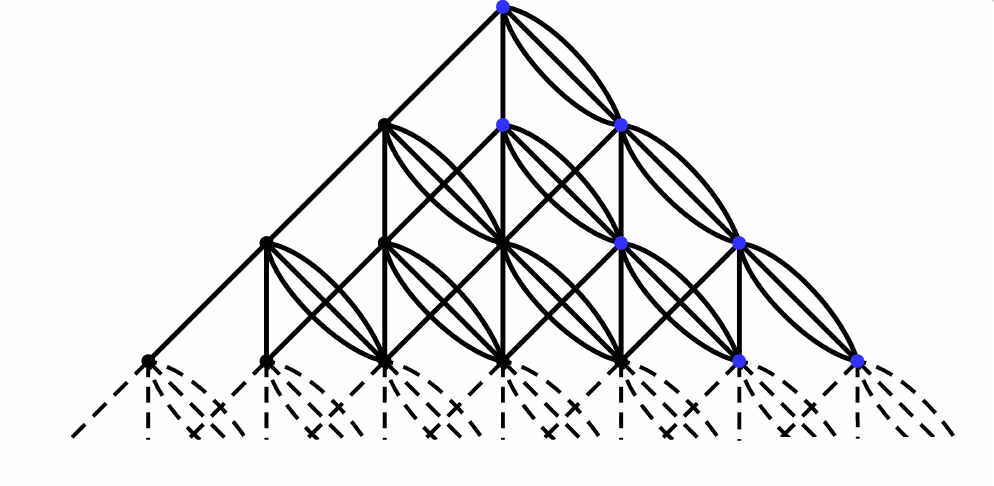}
                \caption{For Bratteli diagram $B_p$ and graph of ingoing paths. }
                \label{Fig:TriangularArray}
\end{figure}

Stochastic version of Theorem \ref{Th:ExistanceOfLimitShapeSLP} is obtained from the following claim: for any $\varepsilon>0$ for $\mu_q$-a.e. $x$ there exists subsequence  $n_j(x)$ such that $\num(w^j), w^j=(x_1,\dots x_{n_j}),$ satisfies the following condition $\frac{\num(w^j)}{H_{n_j,k_{n_j}}}<\varepsilon.$ In fact, even more strong result holds. It follows from the recurrence property of one-dimensional random walk and was first proved by \'E. Janvresse and T. de la Rue in \cite{PascalLooselyBernoulli} to show that the Pascal adic transformation is loosely Bernoulli. Later it was  generalized in
 \cite{Mela2006, Bailey2006} for the polynomial adic systems.
\begin{lemma}
\label{Th:firstFloor}
For any $\varepsilon>0$ and $\mu_q\times \mu_q$-a.e. pair of paths $(x,y)\in X\times X$ there is a subsequence $n_j$ such that $k_{n_j}(x)=k_{n_j}(y)$ and indices $\numC(\omega_x)$, $\numC(\omega_y)$ of paths $\omega_x=(x_1,x_2,\dots x_{n_j})$ and $\omega_y=(y_1,y_2,\dots y_{n_j})$ satisfy the following inequlity $\numC(\omega_z)/H_{n_j,k_{n_j}(x)}<~\varepsilon,$ $z\in\{x,y\},$ for each~$j\in\mathbb{N}$.
\end{lemma}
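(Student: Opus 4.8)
The plan is to reduce the claim to a statement about a two–dimensional random walk and then invoke its recurrence. First I would recall that under the Bernoulli measure $\mu_q$ the letters $\omega_1,\omega_2,\dots$ are i.i.d., so the vertex reached at level $n$ is a deterministic function of the counting vector $\bar s_n=(s_n^0,\dots,s_n^{r-1})$ introduced in Section~3.3; in fact $k_n(\omega)$ depends on $\bar s_n$ only through the integer combination $\sum_{i=0}^{d} i\,(\bar a_i\cdot\bar s_n)$, i.e.\ through how far the path has drifted to the right. For the product system on $X\times X$ with measure $\mu_q\times\mu_q$ the pair $(\omega_x,\omega_y)$ is a realization of two independent copies of this drift. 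The two requirements of the lemma — that the two paths land at the \emph{same} vertex $k_{n_j}(x)=k_{n_j}(y)$, and that each index $\numC$ be a small fraction of the tower height $H_{n_j,k_{n_j}}$ — are exactly the two coordinates one needs a planar walk to visit near the origin.

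Next I would set up the relevant walk explicitly. For the coincidence of vertices I would look at the difference process $k_n(x)-k_n(y)$, which is a mean–zero (by the defining equation for $t_q$ in Theorem~\ref{Th:SLPmeasures}, the measure is chosen so the adic drift is balanced) random walk on $\mathbb Z$ with i.i.d.\ increments; each return of this difference to $0$ furnishes a level at which $k_n(x)=k_n(y)$. For the second requirement I would recall, as the excerpt already indicates, that the condition $\numC(\omega_z)/H_{n,k_n}<\varepsilon$ says that the path $\omega_z$ sits near the \emph{bottom} of its tower $\tau_{n,k_n}$, and that being near the bottom corresponds to the partial counting vector $\bar s_n$ being an (approximate) extreme/minimal arrangement among all paths to that vertex. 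Quantitatively I would express $\numC(\omega)/H_{n,k}$ through formula~\eqref{eq:Numformula} and the generalized binomial coefficients $C_P$, and show via a local central limit estimate for $C_p(n,k)$ that $\numC(\omega_z)/H_{n,k_n}$ is controlled by a second, essentially independent, centered one–dimensional statistic of the path. Thus the joint event ``same vertex and both indices small'' becomes ``a planar $($difference, position-within-tower$)$ walk returns near the origin.''

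The core of the argument is then the recurrence input. I would cite the mechanism of Janvresse--de la Rue from \cite{PascalLooselyBernoulli}, extended to the polynomial setting in \cite{Mela2006,Bailey2006}: the relevant planar walk is recurrent, so almost surely it returns infinitely often to any fixed neighborhood of the origin, and each such return yields a level $n_j$ meeting both conditions of the lemma. Concretely, I would fix $\varepsilon>0$, choose a neighborhood of $0$ small enough that landing in it forces both $k_{n_j}(x)=k_{n_j}(y)$ and $\numC(\omega_z)/H_{n_j,k_{n_j}}<\varepsilon$ for $z\in\{x,y\}$, and then extract the subsequence $(n_j)$ from the infinitely many returns. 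The Borel--Cantelli/recurrence dichotomy guarantees this happens for $\mu_q\times\mu_q$–a.e.\ pair.

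The main obstacle, and the step I expect to require the most care, is the second coordinate: converting the arithmetic condition $\numC(\omega_z)/H_{n,k_n}<\varepsilon$ into a \emph{genuinely two–dimensional recurrence} statement that is asymptotically independent of the vertex–coincidence condition. One must verify that the ratio $\numC/H$ does not degenerate — that its fluctuations are on the diffusive scale and governed by a statistic not measurable with respect to $k_n$ alone — so that the pair of conditions is not over-determined. This is precisely where the local limit behavior of the generalized binomial coefficients $C_p(n,k)$, together with the self-similarity encoded in \eqref{eq:Numformula} and \eqref{eq:PartialSumFormula}, must be used to justify treating the two requirements as two free coordinates of a recurrent planar walk rather than a single one. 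Once that independence/non-degeneracy is established, the recurrence of the two-dimensional walk closes the argument.
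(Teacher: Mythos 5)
Your proposal takes a genuinely different route from the paper, and the route has a gap that cannot be repaired. The paper does not prove Lemma~\ref{Th:firstFloor} from scratch: it cites it as known (``first proved by \'E.~Janvresse and T.~de la Rue in \cite{PascalLooselyBernoulli}\dots later generalized in \cite{Mela2006,Bailey2006}''), and states explicitly that the mechanism is the recurrence of a \emph{one}-dimensional random walk, namely the difference walk $k_n(x)-k_n(y)$, whose i.i.d.\ bounded increments are centered automatically because $x$ and $y$ are equidistributed (not, as you assert, because the defining equation for $t_q$ balances any drift). Your plan instead promotes the condition $\numC(\omega_z)/H_{n,k_n}<\varepsilon$ to a second coordinate of a recurrent planar walk, and this is exactly where the argument breaks. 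First, there are \emph{two} such smallness conditions, one for $\omega_x$ and one for $\omega_y$, so your reduction really requires a centered walk in $\mathbb{Z}^3$ (difference, statistic of $x$, statistic of $y$) to return near the origin; mean-zero random walks in dimension three are transient, so the recurrence input you want to invoke does not exist. Second, even for a single path, $\numC(\omega^{(n)})/H_{n,k_n}$ is not a centered statistic with diffusive fluctuations: it is a $[0,1]$-valued ratio, and since by \eqref{eq:Numformula} each new letter adds to $\numC$ a sum of dimensions at the previous level while $H_{n,k_n}$ grows by a factor of order $p(1)$ per level, the event $\{\numC/H<\varepsilon\}$ is, up to negligible error, determined by the last $O(\log(1/\varepsilon))$ letters and has probability bounded below uniformly in $n$. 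It is a local, positive-probability event, not a return event; the non-degeneracy/diffusivity that you yourself flag as ``the main obstacle'' is not merely delicate --- it is false, so the planar-recurrence framework cannot be salvaged.

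The argument that actually proves the lemma (the one behind the paper's citation) combines one-dimensional recurrence with block forcing and Borel--Cantelli. By Chung--Fuchs, the centered, aperiodic walk $k_n(x)-k_n(y)$ returns to $0$ infinitely often a.s., giving infinitely many levels with $k_n(x)=k_n(y)$. Fix $m$; at any such level, the event that \emph{both} paths continue with $m$ letters of minimal label has probability at least $p_{\min}^{2m}>0$, where $p_{\min}$ is the smallest letter probability in Theorem~\ref{Th:SLPmeasures}, and is independent of the past. Such letters contribute nothing to either $\numC$ (the inner sums in \eqref{eq:Numformula} are empty for the letter $0$), they move both $k$'s identically (each label-$0$ edge raises $k$ by $d$), so the coincidence is preserved at level $n+m$, while $H_{n+m,k_n+dm}/H_{n,k_n}\to p(1)^m$ as $n\to\infty$ along $\mu_q$-typical paths (local limit theorem for $C_p(n,k)$, or estimates in the spirit of Lemma~\ref{Lm:genBinEstimates}). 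Hence $\numC(\omega_z)/H_{n+m,k_{n+m}}\leqslant H_{n,k_n}/H_{n+m,k_{n+m}}<\varepsilon$ for both $z\in\{x,y\}$ once $m$ is fixed with $p(1)^m>2/\varepsilon$. A second Borel--Cantelli argument along the infinitely many return times then yields the subsequence $(n_j)$. In short, the coincidence condition is handled by recurrence, but the two $\numC$-conditions must be handled by a forcing event of fixed positive probability that preserves the coincidence --- not by adding coordinates to the walk.
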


\begin{Theorem}\emph{(}Stochastic variant of Theorem $\ref{Th:ExistanceOfLimitShapeSLP}$.\emph{)}
Let  $(X,T,\mu_q ),$ $q\in(0,\frac{1}{a_0}),$   and $g$  be a cylindric function from $\mathscr{F}_N$. Then for $\mu_q$-a.e. $x$
limiting curve $\varphi^g_x \in C[0,1]$ exists if and only if function $g$ is not cohomologous to a constant.
\label{Th:SLP-LimitingCurveExistance}
\end{Theorem}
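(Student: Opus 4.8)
The statement is an equivalence, and I would prove the two implications separately; the ``only if'' direction is immediate from the general results of Section~2, while the ``if'' direction carries all the work.

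For the forward direction I would argue by contradiction. Suppose $g$ is cohomologous to a constant but a continuous limiting curve nonetheless exists for $\mu_q$-a.e.\ $x$ along some stabilizing sequence $(l_n)$. On one hand, Theorem~\ref{Th:LimitingCurveNesCond} forces the normalizing coefficients $R_{x,l_n}^g$ to be unbounded in $n$. On the other hand, Theorem~\ref{Th:RBoundnessIsEquiToCohomolToConst} asserts that for a cohomologous $g$ the normalizing sequence is bounded. This contradiction shows that if $g$ is cohomologous to a constant no limiting curve can exist, i.e.\ a.e.\ existence of a limiting curve forces $g$ not to be cohomologous to a constant.

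For the converse, assume $g\in\mathscr{F}_N$ is not cohomologous to a constant. The plan is to transport the tower-based convergence of the combinatorial Theorem~\ref{Th:ExistanceOfLimitShapeSLP} onto the genuine forward trajectory of a typical point. That theorem supplies, for $\mu_q$-a.e.\ $x$, a subsequence along which the curves $\varphi_{n,k_n}^g$, built from the partial sums $F_{n,k}^g$ measured from the bottom rung $\tau_{n,k}(1)$, converge uniformly to a continuous $\varphi$; since each $\varphi_{n,k_n}^g$ has sup-norm $1$ by the canonical choice of $R_{n,k}^g$, the limit satisfies $\|\varphi\|_\infty=1$, so in particular $\varphi\neq0$ and $\varphi(0)=0$. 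The device that bridges to the trajectory of $x$ is the recurrence claim stated just before Lemma~\ref{Th:firstFloor}: for a.e.\ $x$ there is a subsequence along which the relative position $u_n:=\num(\omega_x)/H_{n,k_n}\to0$, where $\omega_x=(x_1,\dots,x_n)$. Running the compactness and diagonalization behind Theorem~\ref{Th:ExistanceOfLimitShapeSLP} \emph{inside} this recurrence subsequence, I would arrange both properties to hold along one sequence $(n_j)$. Writing $a=\num(\omega_x)-1$, so that $x=T^{a}\tau_{n,k}(1)$, and choosing the stabilizing length $l_n=H_{n,k}-a$, one has $S_x^g(j)=F_{n,k}^g(a+j)-F_{n,k}^g(a)$ for $0\le j\le l_n$. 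A direct computation then rewrites the numerator of $\varphi_{x,l_n}^g(t)$ as
\[
R_{n,k}^g\Big(\varphi_{n,k}^g(\tau)-(1-t)\,\varphi_{n,k}^g(u_n)\Big),\qquad \tau=t+u_n(1-t).
\]
Here $\tau\to t$ uniformly, $\varphi_{n,k}^g\to\varphi$ uniformly, and $\varphi_{n,k}^g(u_n)\to\varphi(0)=0$ because $u_n\to0$; hence the bracket converges uniformly to $\varphi(t)$. Dividing by the canonical normalizer, which is the supremum of the numerator and therefore is asymptotic to $R_{n,k}^g\|\varphi\|_\infty=R_{n,k}^g$, yields $\varphi_{x,l_n}^g\to\varphi$ in sup-metric, producing the limiting curve for $\mu_q$-a.e.\ $x$ and completing the equivalence.

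The hard part is the displayed rewriting. A naive bound on the stray term $u_nF_{n,k}^g(H_{n,k})-F_{n,k}^g(a)$ only gives $O(a)$, which need not be $o(R_{n,k}^g)$; the crucial observation is that this term equals precisely $-R_{n,k}^g\,\varphi_{n,k}^g(u_n)$, so that the already-established uniform convergence of the combinatorial curve together with $\varphi(0)=0$ forces it to be $o(R_{n,k}^g)$. A secondary but necessary care is that the recurrence subsequence and the convergence subsequence be made to coincide, which is exactly why I would perform the diagonalization within the recurrence subsequence rather than in the full sequence of levels.
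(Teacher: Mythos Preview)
Your proof is correct and follows the same route as the paper, which dispatches the theorem in one line by citing Lemma~\ref{Th:firstFloor}, Theorem~\ref{Th:ExistanceOfLimitShapeSLP}, and Theorem~\ref{Th:RBoundnessIsEquiToCohomolToConst}. You have supplied exactly the details those citations leave implicit---most notably the exact rewriting of the trajectory-based numerator as $R_{n,k}^g\big(\varphi_{n,k}^g(\tau)-(1-t)\,\varphi_{n,k}^g(u_n)\big)$ and the observation that the stray term vanishes because $\varphi_{n,k}^g(u_n)\to\varphi(0)=0$---as well as correctly invoking Theorem~\ref{Th:LimitingCurveNesCond} for the ``only if'' direction.
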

\begin{proof} Follows from Lemma \ref{Th:firstFloor}, Theorem \ref{Th:ExistanceOfLimitShapeSLP} and Theorem \ref{Th:RBoundnessIsEquiToCohomolToConst}.
\end{proof}

\textbf{Remark}
Lemma \ref{Th:firstFloor} implies that appropriate choice of stabilizing sequence $ l_n(x)$ can provide the same limiting curve $\varphi_x^g,$ $\lim\limits_{j\rightarrow\infty }||\varphi_{x,l_j(x)}^g-\varphi_x^g||=0$,  for $\mu_q$-a.e.~$x$.

Finally we prove Proposition \ref{Pr:aka3.1Prop} used above. It generalizes Proposition $3.1$ from \cite{DeLaRue}. However, its proof needs an additional statement due to the non unimodality of generalized binomial coefficients $C_{p}(n,k)$:

\begin{lemma} Let  $p(x)=a_0+a_1x+\dots+a_dx^d$ be a positive integer polynomial. Then the following holds:

 $1.$ There exist $n_1\in\mathbb{N}$ and  $C_1>0$, depending only on $\{a_0,\dots,a_d\}$, such that  $\max\limits_k\{\frac{C_p(n,k+1)}{C_p(n,k)},\frac{C_p(n,k)}{C_p(n,k+1)} \}\leqslant  C_1n$ for $n>n_1.$

 $2.$ $C_p(n-1, k-i)\leqslant \frac{1}{a_i}\max\{\frac{k}{n}, 1-\frac{k}{n}\}C_p(n,k),\, 0\leqslant i \leqslant d.$
 \label{Lm:genBinEstimates}
\end{lemma}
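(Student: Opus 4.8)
The plan is to treat the two parts by different mechanisms: part 2 will follow from two ``first--moment'' identities for the coefficients $C_p(n,k)$, whereas part 1 — the genuinely new ingredient, forced by the non-unimodality of the $C_p(n,k)$ — is best handled by an explicit weight--controlled almost--injection between coefficients on the same level.

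For part 2 I would start from the two identities
\[\sum_{j=0}^d j\,a_j\,C_p(n-1,k-j)=\tfrac{k}{n}\,C_p(n,k),\qquad \sum_{j=0}^d (d-j)\,a_j\,C_p(n-1,k-j)=\big(d-\tfrac{k}{n}\big)C_p(n,k).\]
The first is obtained by comparing the coefficient of $x^{k-1}$ on the two sides of $\tfrac{d}{dx}p(x)^n=n\,p(x)^{n-1}p'(x)$; the second is then $d$ times the basic recursion $C_p(n,k)=\sum_j a_jC_p(n-1,k-j)$ minus the first. Every summand on the left of each identity is nonnegative, so one may isolate a single term. For $i\ge1$ the first identity gives $i\,a_i\,C_p(n-1,k-i)\le \tfrac kn C_p(n,k)$, hence $C_p(n-1,k-i)\le\tfrac1{a_i}\tfrac kn C_p(n,k)$; for $i=0$ the second gives $d\,a_0\,C_p(n-1,k)\le(d-\tfrac kn)C_p(n,k)$, hence $C_p(n-1,k)\le\tfrac1{a_0}\big(1-\tfrac{k}{nd}\big)C_p(n,k)$. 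Each of these right--hand factors is controlled by $\max\{\tfrac kn,1-\tfrac kn\}$ in the relevant range of $k$, which yields the asserted estimate for all $0\le i\le d$.

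For part 1 I would fix a level $n$ and read $C_p(n,k)$ as the total $\prod_{m=1}^n a_{j_m}$--weight of sequences $(j_1,\dots,j_n)\in\{0,\dots,d\}^n$ with $\sum_m j_m=k$. Define a map $\Phi$ on sequences summing to $k+1$ by locating the smallest index $m^*$ with $j_{m^*}\ge1$ and lowering that entry by one. The image sums to $k$, the weight changes only by the factor $a_{j_{m^*}}/a_{j_{m^*}-1}\le \AAA/\aaa$, and each target sequence has at most $n$ preimages (one per coordinate that could be raised). Summing the weight relation over all target sequences gives $C_p(n,k+1)\le n\,\tfrac{\AAA}{\aaa}\,C_p(n,k)$, and the symmetric map raising the smallest entry below $d$ gives the reverse inequality. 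Thus part 1 holds with $C_1=\AAA/\aaa$ for every $n$ (so any $n_1$ works).

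The main obstacle is exactly part 1: since $C_p(n,k)$ need not be unimodal in $k$, one cannot argue from monotonicity of the coefficients, and the local $(d+1)$--term recurrence coming from $p(x)\tfrac{d}{dx}p(x)^n=n\,p'(x)\,p(x)^n$ has sign--changing coefficients that make a direct ratio estimate awkward for $d\ge2$. The weight--controlled almost--injection bypasses this entirely and produces the sharp linear--in--$n$ growth, consistent with the extreme--edge ratio $C_p(n,1)/C_p(n,0)=n\,a_1/a_0$.
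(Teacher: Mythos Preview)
Your argument for part~2 follows the same route as the paper: both derive the identity $\sum_{j} j\,a_j\,C_p(n-1,k-j)=\tfrac{k}{n}\,C_p(n,k)$ by differentiating $p(x)^n$ and read off the one--term inequality. You go slightly further by also recording the companion identity (with weights $d-j$) to cover $i=0$, which the paper's proof leaves implicit; your hedge ``in the relevant range of $k$'' is in fact necessary here, since for $i=0$ and $d\ge2$ the factor $1-\tfrac{k}{nd}$ you obtain is not always dominated by $\max\{\tfrac{k}{n},1-\tfrac{k}{n}\}$ as literally stated. That is a wrinkle in the lemma's formulation rather than in your argument, and the bound you actually produce is precisely what the application in Proposition~\ref{Pr:aka3.1Prop} needs.

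Your treatment of part~1 is genuinely different from the paper's. The paper first invokes a theorem of Odlyzko and Richmond guaranteeing that the coefficients $C_p(n,\cdot)$ become unimodal for all $n\ge n_1$, uses the finite level $n_1$ to extract a base ratio $C/c=\max_k C_p(n_1,k)/\min_k C_p(n_1,k)$, and then runs an induction on $n$ through the recurrence $C_p(n,k)=\sum_j a_jC_p(n-1,k-j)$ to reach the bound $\AAA\,\tfrac{C}{c}\,dn$. Your weight--controlled almost--injection sidesteps both the external unimodality result and the induction: it is completely elementary, gives the explicit constant $C_1=\AAA/\aaa$, and is valid for every $n\ge1$ (so no threshold $n_1$ is required). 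The paper's route has the mild expository virtue of explaining why an $n_1$ appears in the statement at all, but at the cost of importing a nontrivial reference; for the purposes of Proposition~\ref{Pr:aka3.1Prop}, where only a polynomial--in--$n$ control of adjacent ratios is used, your combinatorial map is the cleaner and more self--contained argument.
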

\begin{proof}
$1.$
Let  $X$ be a discrete random variable on $\{0,1,\dots, d\}$ with distribution associated to the polynomial $p(x),$ that is $\text{Prob}(X=k) =a_k/{p(1)}, 0\leqslant k\leqslant d.$ Distribution of a sum $Y_n=X_1+X_2+\dots X_n$ of i.i.d. random  variables $X_k,0\leqslant k\leqslant n,$ with distributions associated to the polynomial $p(x)$, is associated to the polynomial $p^n(x),$ i.e. $\text{Prob}(Y_n=k) =C_p(n,k)/{p^n(1)}, 0\leqslant k\leqslant nd.$ A.~Oldyzko and L.~Richmond showed in \cite{OldyzkoRichmond1985} that the function  $f_n(k)\equiv \text{Prob}(Y_n=k)$ is asymptotically unimodal, i.e. for $n\geq n_1$,   coefficients $C_p(n,k),$ $0\leqslant k \leqslant nd,n\geqslant n_1,$ first increase  (in $k$) and decrease then.

We denote by $C$ and $c$ the maximum and the minimum values of the coefficients  $\{C_p(n_1,k)\}_{k=0}^{n_1d}$ of the polynomial $p^{n_1}(x)$.  Let also $\AAA$ denote the maximum of the coefficients  $\{a_0,\dots,a_d\}$ of the polynomial $p(x)$.
We will use induction in $n$ to prove that  $\frac{C_p(n,k+1)}{C_p(n,k)}\leqslant  \AAA\frac{C}{c}dn, 0\leqslant k\leqslant nd-1, n\geqslant n_1$. (The second estimate $\frac{C_p(n,k)}{C_p(n,k+1)} \leqslant   \AAA\frac{C}{c}dn$ can be  proved in the same way).
We start now with the base case: For $n=n_1$ it obviously holds that $\frac{C_p(n_1, k+1)}{C_p(n_1, k)}\leqslant \frac{C}{c}\leqslant \frac{Cd\AAA}{c}, 0\leqslant k\leqslant n,$ hence
we have shown the base case.

Now assume that we have already shown  $\frac{C_p(n-1, k)}{C_p(n-1, k-1)}\leqslant \frac{Cd\AAA}{c}(n-1)$, where  $1\leqslant k\leqslant d (n-1)$  and $n\geq n_1.$ We need to show that $\frac{C_p(n, k)}{C_p(n, k-1)}\leqslant \frac{Cd\AAA}{c}n, $  $1\leqslant k\leqslant dn.$
\begin{multline}
\frac{C_p(n, k+1)}{C_p(n, k)} = \frac{\sum_{i=0}^d a_iC_p(n-1,k+1-i)}{ \sum_{i=0}^d a_iC_p(n-1,k-i)}
\leqslant  \\
\leqslant \frac{C_p(n-1,k)\big(a_0+a_1+\dots+a_{d-1}+d \AAA a_d\frac{C}{c}(n-1)\big)}{a_dC_p(n-1, k)}\leqslant \\
\leqslant\frac{a_0+a_1+\dots+a_{d-1}-d\AAA}{a_d}+\frac{\AAA C d n}{c}\leqslant \frac{\AAA Cd}{c}  n.
\end{multline}
$2.$ The  statement follows directly from the following identity for the generalized binomial coefficients: \begin{equation}\label{eq:GenBinIdentity}\sum\limits_{i=1}^dC_p(n-1,k-i)a_ii=\frac{k}{n}C_p(n,k).\end{equation}
To show it we differentiate identity $p^n(x)=\sum_{k\geq0} C_p(n,k) x^k$ resulting
$np^{n-1}(x)p'(x) = \sum_{k\geq0} kC_p(n,k) x^{k-1},$ $ p'(x)=a_1+2a_2x+\dots+da_dx^{d-1}.$
It remains to equate exponents from the two sides.
\end{proof}

The following proposition generalizes Propositon 3.1 from \cite{DeLaRue}. We  preserved the original notation where it was possible.

\begin{proposition} Let $N\geqslant 1$ be a positive integer and $\delta\in(0,\frac14)$ be a small parameter. Let   ${A}=A(\bar{n},\bar{k})\in B_p$ be a vertex with coordinates $\big(\bar{n},\bar{k}\big)$ satisfying
$2\delta\bar{n}\leqslant k\leqslant(d-2\delta)\bar{n}$ and
$2\delta\bar{n}\leqslant nd-k\leqslant(d-2\delta)\bar{n}$.
Let $\alpha_{l},0\leqslant l\leqslant Nd,$ be real numbers, such that $\sum\limits_{l=0}^{Nd}\alpha_{l}^2>0$. Let $n, N\leqslant n\leqslant \bar{n},$ and  ${B}(n,k)=\big(n,k\big)$ be a vertex with coordinates satisfying $0\leqslant k\leqslant \bar{k}, 0\leqslant n-k\leqslant \bar{n}-\bar{k}.$  Define
\begin{equation}\label{ineq:generalizedBinomialIneq}\gamma_{n,k} = \frac{1}{R}\sum\limits_{l=0 }^{Nd}\alpha_{l}\, C_d(n-N,k-l),\end{equation}
where $R = R(A,B,\delta)$ is a renormalization constant such that $|\gamma_{n,k}|$ are uniformly in $n$ and $k$  from $0\leqslant k\leqslant \bar{k},0\leqslant n-k\leqslant\bar{n}- \bar{k}, N\leqslant n\leqslant \bar{n}$ bounded by $2$. Then there exist a constant  $C = C(\delta,N),$ such that, provided $\bar{n}$ is large enough, the following inequality holds for all $n,k$:
\[|\gamma_{n,k}|\leqslant 3e^{-C(\bar{n}-n)}.\]
\label{Pr:aka3.1Prop}
\end{proposition}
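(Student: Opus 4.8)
The plan is to prove the estimate by showing that the normalized quantity $\gamma_{n,k}$ decays geometrically as we descend from the level $\bar n$ toward level $n$, exploiting the recursion $C_p(n,k)=\sum_{j=0}^d a_j C_p(n-1,k-j)$ inherited by the $\gamma$'s. The key observation is that the definition \eqref{ineq:generalizedBinomialIneq} is linear in the binomial coefficients $C_p(n-N,k-l)$, so the recurrence for $C_p$ lifts to a recurrence for $\gamma_{n,k}$: one has $R\gamma_{n,k}=\sum_{l=0}^{Nd}\alpha_l C_p(n-N,k-l)$, and applying the Pascal-type recursion to each term gives $\gamma_{n,k}=\sum_{j=0}^d a_j \tfrac{C_p(n-1-N,\,\cdot\,)}{(\text{denominators})}$ expressed through the $\gamma$'s at level $n-1$. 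Written properly, $\gamma_{n,k}$ is a \emph{weighted average}, with positive weights $a_j C_p(n-1-N,\cdot)/C_p(n-N,\cdot)$-type ratios, of the values $\gamma_{n-1,k-j}$ for $0\le j\le d$. The point of the averaging structure is that whenever the $\gamma$'s at a given level do not all share the same sign, a convex (or at least positively-weighted) combination strictly contracts the sup-norm.

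First I would fix the geometry: the hypotheses $2\delta\bar n\le k\le(d-2\delta)\bar n$ and the corresponding bounds on $nd-k$, together with $0\le k\le\bar k$, $0\le n-k\le\bar n-\bar k$, confine the relevant vertices $B(n,k)$ to a cone staying uniformly away from the lateral boundaries of the diagram; this is exactly where part~2 of Lemma~\ref{Lm:genBinEstimates} applies, giving $C_p(n-1,k-i)\le \tfrac{1}{a_i}\max\{\tfrac kn,1-\tfrac kn\}C_p(n,k)$ with the max bounded away from $1$ by a constant $\rho=\rho(\delta)<1$. Next I would establish the contraction: set $M_n=\max_k|\gamma_{n,k}|$ over the admissible $k$ at level $n$. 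Using the averaging representation of $\gamma_{n,k}$ in terms of $\{\gamma_{n-1,k-j}\}_{j=0}^d$ and the ratio bounds from Lemma~\ref{Lm:genBinEstimates}, I would show $M_{n-1}\le \lambda M_n$ for a fixed $\lambda=\lambda(\delta)<1$ as long as the vertices remain in the cone — intuitively, stepping down one level spreads the coefficient mass across $d+1$ neighbors whose total contribution cannot reconstruct the full extremal value because at least a fixed fraction $\rho$ of the binomial mass leaks sideways out of the window that defines the extremum. Iterating $M_n\le \lambda^{\bar n-n} M_{\bar n}$ and using the uniform bound $M_{\bar n}\le 2$ from the normalization then yields $|\gamma_{n,k}|\le 2\lambda^{\bar n-n}\le 3e^{-C(\bar n-n)}$ with $C=-\log\lambda$.

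The main obstacle I anticipate is \emph{non-unimodality} of $C_p(n,k)$, which is precisely why Lemma~\ref{Lm:genBinEstimates} was proved beforehand. Without unimodality, the naive argument that the extremizing index stays under control can fail, and the ratios $C_p(n,k+1)/C_p(n,k)$ are not monotone; the remedy is to use the \emph{asymptotic} unimodality (Oldyzko--Richmond) guaranteeing a single increase-then-decrease profile for $n\ge n_1$, plus the polynomial ratio bound $C_1 n$ from part~1 of the lemma to control the transient regime. Concretely, the contraction step must be set up so that the single admissible constant $\lambda<1$ can be extracted uniformly despite the $O(n)$ fluctuation of individual ratios: I would absorb the polynomial factors into the geometric rate by noting that $n^{\,\bar n -n}$-type overheads are dominated by $\lambda^{\bar n-n}$ once $\bar n$ is large (hence the ``provided $\bar n$ is large enough'' clause), and that $\sum_l \alpha_l^2>0$ guarantees the numerator is not identically zero so that $R$ is well defined. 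The remaining bookkeeping — checking that the $d+1$ neighbor vertices $(n-1,k-j)$ indeed stay within the admissible cone and that the $\max\{k/n,1-k/n\}$ stays $\le\rho<1$ throughout the descent — is routine given the margin $2\delta$ built into the hypotheses, and I would verify it once and then invoke it along the whole iteration.
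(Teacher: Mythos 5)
Your plan hinges on the level-by-level contraction $M_{n-1}\leqslant \lambda M_n$, where $M_n=\max_k|\gamma_{n,k}|$, and that is precisely the step that cannot be carried out with the ingredients you invoke. The lifted recurrence is exact and simpler than you state: since the $R$'s cancel, $\gamma_{n,k}=\sum_{j=0}^{d}a_j\,\gamma_{n-1,k-j}$, a positively weighted sum whose weights sum to $p(1)>1$ (so it is not an average). Such a recursion transfers bounds \emph{upward} only: it bounds a level-$n$ value by level-$(n-1)$ values, never the reverse. For a general array satisfying this recursion, $M_n$ can even vanish while $M_{n-1}$ is arbitrarily large (alternating signs at level $n-1$ cancel one level up), so no argument using only the recursion, positivity of the $a_j$, and the ratio estimates of Lemma \ref{Lm:genBinEstimates} can produce $M_{n-1}\leqslant\lambda M_n$. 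Your observation that mixed signs make a positively weighted combination contract the sup-norm is true but points in the wrong direction: cancellation makes the \emph{upper} level small relative to the lower one, which is the opposite of what Proposition \ref{Pr:aka3.1Prop} claims. What forbids such cancellation here is a rigidity property of this particular family: every row is generated by the same fixed vector $(\alpha_l)$ through generalized binomial coefficients. The paper's proof uses exactly this, by factoring $R\gamma_{n,k}=C_d(n-N,k-l_0)\,P(n,k,l_0)$ with $P(n,k,l_0)=\sum_l\alpha_l\,C_d(n-N,k-l)/C_d(n-N,k-l_0)$ for one index $l_0$ with $\alpha_{l_0}\neq0$. Part 1 of Lemma \ref{Lm:genBinEstimates} is applied only $Nd$ times, \emph{horizontally} (comparing the shifts $l$ against $l_0$ within one level), to show $P$ deviates from $P(\bar{n},\bar{k},l_0)$ by at most a fixed polynomial $Q(\bar{n})$ of degree $\leqslant Nd$; the normalization $|\gamma_{\bar{n},\bar{k}}|\leqslant2$ anchors $P$ at the top vertex; and part 2 is applied $\bar{n}-n$ times, \emph{vertically}, to the single positive ratio $C_d(n-N,k-l_0)/C_d(\bar{n}-N,\bar{k}-l_0)$ inside the $\delta$-cone, yielding the factor $e^{-\tilde{C}(\delta)(\bar{n}-n)}$. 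Signed sums are never pushed through the level-to-level inequalities — only one coefficient is — and that separation of the exponentially decaying ratio from the slowly varying signed factor is the idea missing from your proposal.

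A second, quantitative flaw is your treatment of the polynomial overheads. If the $O(n)$ ratio bound of part 1 is incurred once per level of descent, the accumulated factor is of order $(C_1\bar{n})^{\bar{n}-n}$, which dominates — it is not dominated by — any geometric factor $\lambda^{\bar{n}-n}$, and taking $\bar{n}$ large only makes this worse. The paper's argument escapes this because the polynomial bound is used a number of times depending only on $N$ and $d$, never on $\bar{n}-n$, so the total polynomial cost is the single factor $Q(\bar{n})$, which the exponential $e^{-\tilde{C}(\delta)(\bar{n}-n)}$ can absorb for $\bar{n}$ large (the trivial bound $|\gamma_{n,k}|\leqslant2$ handling small values of $\bar{n}-n$). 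Your geometric picture of the $\delta$-cone is correct in spirit — it is what keeps the per-level factors of part 2 below a constant $\rho(\delta)<1$ along the segment joining $B$ to $A$ — but it serves the paper's single-coefficient ratio estimate, not a sup-norm contraction of the signed array.
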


Conditions on the vertex ${A}$ $\delta$-separate it from "boundary"\, vertices $(\bar{n},0)$ and $(\bar{n},d\bar{n})$. Conditions on the vertex $B={B}(n,k)$ provides it can be considered as a vertex in a "flipped"\, graph and that it can be connected with the vertex   $A$, see Fig.~\ref{Fig:InvertedGraph}.

\begin{figure}[h]
                \centering
                \includegraphics[scale=0.6]{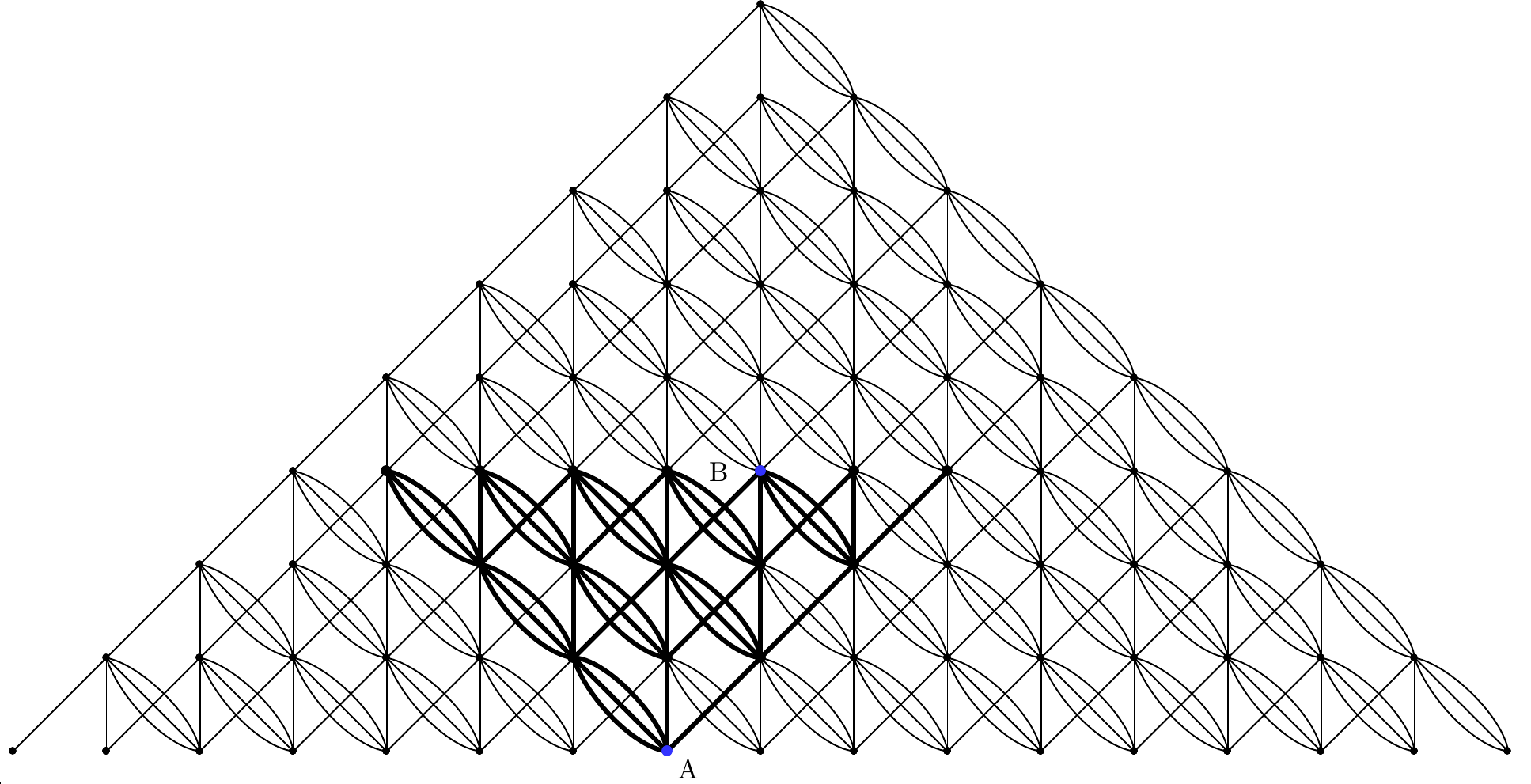}
                \caption{Vertices   $A$ and $B$ in the graph $B_p$. }
                \label{Fig:InvertedGraph}
\end{figure}

\begin{proof} We can assume that $\bar{n}>2n_1,$ where $n_1=n_1(a_0,a_1,\dots,a_d)$, is defined in the proof of Lemma \ref{Lm:genBinEstimates}.
 Let $l_0, 0\leqslant l_0\leqslant Nd,$ be such that coefficient $\alpha_{l_0}$ is nonzero. We can rewrite the right hand side of  \eqref{ineq:generalizedBinomialIneq} as follows:
\[R\gamma_{n,k}= C_d(n-N,k-l_{0})P(n,k,l_0), N\leqslant n\leqslant \bar{n}, 0\leqslant k\leqslant nd,\]
where $P(n,k,l_0)$ is defined by $\sum\limits_{l=0 }^{N}\alpha_{l}\, \frac{C_d(n-N,k-l)}{C_d(n-N,k-l_0)}$.
Let $\alpha$ denote the maximum of $|\alpha_l|, 0\leqslant l\leqslant Nd.$
We want to show that there is a polynomial $Q(x)$ of degree $\text{deg}(Q)\leq Nd$ such that
\begin{equation}\label{ineq:P-deviation}
|P(n,k,l_0) - P(\bar{n},\bar{k},l_0)|\leqslant Q(\bar{n}). \end{equation}
It is enough to show that there is $c_1>0$ such that $|\frac{C_d(n-N,k-l)}{C_d(n-N,k-l_0)}|\leqslant c_1 n^{Nd}, 0\leqslant l\leqslant Nd, N\leqslant n\leqslant \bar{n}$.
The latter inequality follows from $Nd$ fold  application of  part $1$ of Lemma \ref{Lm:genBinEstimates}.
Define function $\tilde{Q}$  by  $\tilde{Q} =P(n,k,l_0) - P(\bar{n},\bar{k},l_0)$. We can write

\begin{multline}\gamma_{n,k} =\frac{1}{R}C_d(n-N,k-l_{0})P(n,k,l_0) =\\ =\frac{C_d(n-N,k-l_{0})}{C_d(\bar{n}-N,\bar{k}-l_{0})}\frac{C_d(\bar{n}-N,\bar{k}-l_{0})P(n,k,l_0)}{R}  =\\=\frac{C_d(n-N,k-l_{0})}{C_d(\bar{n}-N,\bar{k}-l_{0})}\frac{C_d(\bar{n}-N,\bar{k}-l_{0})(P(\bar{n},\bar{k},l_0)+\tilde{Q})}{R}.  \end{multline}
By the assumption we have $|\gamma_{\bar{n},\bar{k}}|=|\frac{1}{R}P(\bar{n}, \bar{k},l_0)C_d(\bar{n}-N,\bar{k}-l_0)|\leqslant 2.$ Therefore inequality~\eqref{ineq:P-deviation} can be written as $|\tilde{Q}|\leqslant Q.$ We get
\[|\gamma_{n,k}|\leqslant3\frac{Q(\bar{n})C_d(n-N,k-l_{0})}{C_d(\bar{n}-N,\bar{k}-l_{0})}.\]
Applying the estimate from  part  $2$ of Lemma \ref{Lm:genBinEstimates}  $(\bar{n}-n)$ times and using assumptions on the vertices  $A$ and $B$,
we obtain that  $\frac{C_d(n-N,k-l_{0})}{C_d(\bar{n}-N,\bar{k}-l_{0})}\leqslant 3e^{-\tilde{C}(\delta)(\bar{n}-n)}$ for some $\tilde{C}(\delta)>0.$
Finally we  get (an independent of the initial choice of $l_0$) estimate:
\[|\gamma_{n,k}|\leqslant3\frac{Q(\bar{n})C_d(n-N,k-l_{0})}{C_d(\bar{n}-N,\bar{k}-l_{0})} \leqslant 3e^{-C(\delta)(\bar{n}-n)}\]
for some  $C(\delta)>0.$

\end{proof}

\subsection{Examples of  limiting curves}

Let $q_1$ and $q_2$ be two numbers (parameters) from $(0, 1).$
We consider the function $S^{p}_{q_1,q_2}:[0,1]\rightarrow [0,1]$ that maps a number $x$ with $q_1$-$r$-adic representation
$x = \sum\limits_{j=1}^\infty I_{q_1}(\omega_j)q_1^j\Big(\frac{t_{q_1}}{{q_1}}\Big)^{\bar{a}_1\cdot\bar{s}_j+2\bar{a}_2\cdot\bar{s}_j+\dots+d\,\bar{a}_d\cdot\bar{s}_j}$   to \begin{equation}
\label{eq:explicit_Sq1q2}
S^{p}_{q_1,q_2}(x) =  \sum\limits_{j=1}^\infty I_{q_2}(\omega_j)q_2^j\Big(\frac{t_{q_2}}{{q_2}}\Big)^{\bar{a}_1\cdot\bar{s}_j+2\bar{a}_2\cdot\bar{s}_j+\dots+d\,\bar{a}_d\cdot\bar{s}_j}.
\end{equation}
For any $q_1$-$r$-\emph{stationary} point $x_0=\sum\limits_{j=1}^m I_{q_1}(\omega_j)q_1^j\Big(\frac{t_{q_1}}{{q_1}}\Big)^{\bar{a}_1\cdot\bar{s}_j+2\bar{a}_2\cdot\bar{s}_j+\dots+d\,\bar{a}_d\cdot\bar{s}_j}$
and any $x \in [0, 1]$ the function $S^{p}_{q_1,q_2}$ satisfies the following self-affinity property:
\begin{equation}
\label{eq:selfSimularity_Sq1q2}
S^{p}_{q_1,q_2}\Big(x_0+r_{q_1}x\Big) =     S^{p}_{q_1,q_2}(x_0) + r_{q_2}S^{p}_{q_1,q_2}(x),
\end{equation}
where $r_{q_i} = q_i^m\Big(\frac{t_{q_i}}{{q_i}}\Big)^{\bar{a}_1\cdot\bar{s}_m+2\bar{a}_2\cdot\bar{s}_m+\dots+d\,\bar{a}_d\cdot\bar{s}_m}, i=1,2.$
Expression \eqref{eq:selfSimularity_Sq1q2} means that the graph of $S^{p}_{q_1,q_2}$ considered on the $q$-$r$-adic interval  $[x_0, x_0+r_{q_1}]$ coincides after  renormalization with the graph of $S^{p}_{q_1,q_2}$ on the whole interval $[0,1]$. Also for $q_1=1/r$ function $S^{p}_{1/r,q_2}$ is the distribution function of the measure~$\tilde{\mu}_{q_2}.$

Functions $S^{p}_{q_1,q_2}(\cdot)$ allow us to define new functions
\[\T_{p,q_1}^{k} : = \frac{\partial^k S^{p}_{q_1,q_2}}{\partial q_2^k}\Big|_{q_2=q_1},\ k\in\mathbb{N}.\]
If $k=0$ we will assume that $\T_{p,q}^{0} (x) =x.$
For $q={1/2}$ and  $k=1$ function $\frac12 \T_{1+x,{1/2}}^{1}$ is the Takagi function, see~\cite{Takagi1903}. The function $\T_{p,q_1}^{k} $ on the interval $[x_0, x_0+r_{q_1}]$  can be expressed by a linear combination of the functions $\T_{p,q_1}^{j},0\leqslant j\leqslant k.$ (Expression can be easily obtained  by differentiating  identity \eqref{eq:selfSimularity_Sq1q2} with respect to parameter $q_2$ and defining $q_2$ equal to $q_1$.)

\begin{figure}[t!]
                \centering
                \includegraphics[scale=0.6]{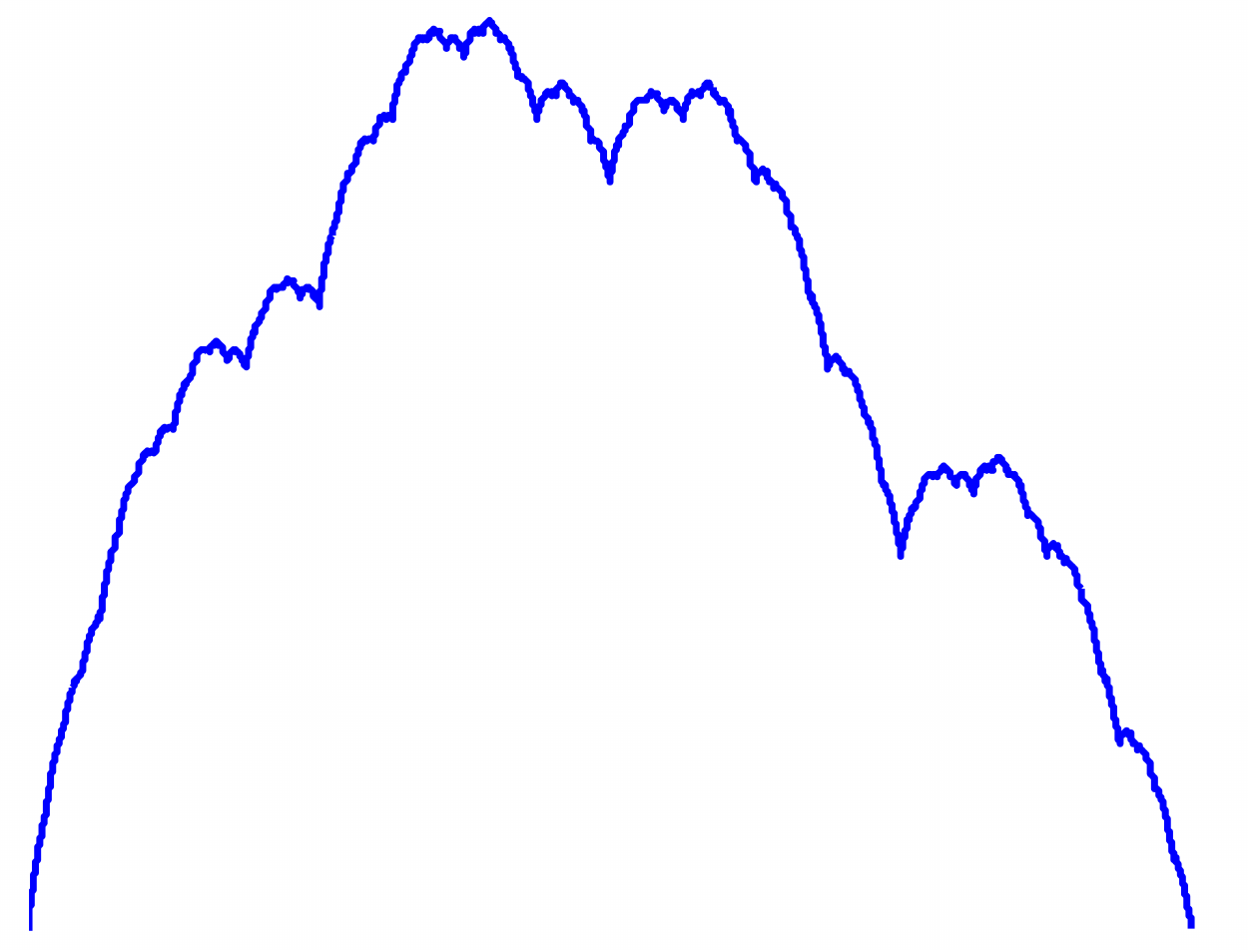}
                \caption{Graph of the funciton $\T_{p,q}^{1}$, defined by polynomial $p(x)=1+x+2x^2$ with parameter $q$ equal to $\frac14.$ }
                \label{Fig:systemSLP[1,1,2]}
\end{figure}

\begin{Theorem} Functions  $\T_{p,q}^{k}, q\in(0,1/a_0), k\geq1,$ are continuous functions  on $[0,1]$.
\label{TpqIsCont}
\end{Theorem}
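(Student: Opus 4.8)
The plan is to fix $q\in(0,1/a_0)$ and exploit the self-affinity \eqref{eq:selfSimularity_Sq1q2} after differentiating in the second parameter. First I would record the analytic structure in $q_2$: for a fixed $x$ the digits $(\omega_j)$ and counts $\bar{s}_j$ in \eqref{eq:explicit_Sq1q2} are determined by the $q$-$r$-adic expansion and do not depend on $q_2$, so the $j$-th summand is $I_{q_2}(\omega_j)\,\lambda_j(q_2)$ with $\lambda_j(q_2)=q_2^{\,j}(t_{q_2}/q_2)^{e_j}$ the $q_2$-length of the rank-$j$ cylinder through $x$ and $e_j\le dj$. Since $t_{q_2}$ is a real-analytic function of $q_2$ (apply the implicit function theorem to the defining equation of Theorem \ref{Th:SLPmeasures}), each summand is analytic in $q_2$; moreover $\lambda_j(q_2)\le\rho^{\,j}$, where $\rho<1$ bounds every single-digit probability of $\mu_{q_2}$ for $q_2$ in a compact neighbourhood of $q$. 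Differentiating $\lambda_j$ in $q_2$ only produces factors $\partial_{q_2}\log\lambda_j=O(j)$, so $|\partial_{q_2}^{k}(I_{q_2}\lambda_j)|\le C\,j^{k}\rho^{\,j}$. By the Weierstrass test the termwise $k$-th derivative series converges locally uniformly in $q_2$ and uniformly in $x$; hence $\T_{p,q}^{k}$ is well defined, equals this series, and is bounded by $M_k:=C\sum_{j\ge1}j^{k}\rho^{\,j}<\infty$.

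Second, I would differentiate the functional equation. Because the argument $x_0+r_{q_1}x$ in \eqref{eq:selfSimularity_Sq1q2} does not depend on $q_2$, applying $\partial_{q_2}^{k}\big|_{q_2=q}$ and the Leibniz rule to the product $r_{q_2}S^{p}_{q_1,q_2}(x)$ yields, for every rank-$m$ stationary point $x_0$ and all $x\in[0,1]$,
\[
\T_{p,q}^{k}(x_0+r_{q,m}x)=\T_{p,q}^{k}(x_0)+r_{q,m}\,\T_{p,q}^{k}(x)+\sum_{i=1}^{k}\binom{k}{i}r^{(i)}_m\,\T_{p,q}^{k-i}(x),
\]
with $r^{(i)}_m=\partial_{q_2}^{i}r_{q_2,m}\big|_{q_2=q}$ and $r_{q_2,m}$ the $q_2$-length of the rank-$m$ interval (the quantity $r_{q_i}$ of \eqref{eq:selfSimularity_Sq1q2}). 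Thus on each rank-$m$ cylinder interval $\T_{p,q}^{k}$ is a rescaled copy of itself plus a fixed combination of the strictly lower-order functions $\T_{p,q}^{k-i}$, $i\ge1$.

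Third, I would prove continuity by induction on $k$, the base case $\T_{p,q}^{0}=\mathrm{id}$ being trivial. Assuming $\T_{p,q}^{0},\dots,\T_{p,q}^{k-1}$ continuous (hence bounded with finite oscillation on $[0,1]$), I would estimate the oscillation of $\T_{p,q}^{k}$ on an arbitrary rank-$m$ interval directly from the displayed identity,
\[
\mathrm{osc}_{[x_0,x_0+r_{q,m}]}\T_{p,q}^{k}\le r_{q,m}\cdot 2M_k+\sum_{i=1}^{k}\binom{k}{i}\,|r^{(i)}_m|\cdot 2\|\T_{p,q}^{k-i}\|_\infty,
\]
where $r_{q,m}\le\rho^{\,m}$ and $|r^{(i)}_m|\le C\,m^{i}\rho^{\,m}$ by the same logarithmic-derivative bound as above; the right-hand side is at most $C'\,m^{k}\rho^{\,m}\to0$ uniformly over all rank-$m$ intervals. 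Since the identity $S^{p}_{q_1,q_2}=F_{\mu_{q_2}}\circ F_{\mu_{q_1}}^{-1}$ (from $S^{p}_{1/r,q_2}=F_{\mu_{q_2}}$ and the evident composition rule $S^{p}_{a,c}=S^{p}_{b,c}\circ S^{p}_{a,b}$) shows $S^{p}_{q_1,q_2}$ to be a homeomorphism of $[0,1]$, it is single-valued at stationary points and $\T_{p,q}^{k}$ is unambiguous there; hence the polygonal interpolation $\psi_m$ of $\T_{p,q}^{k}$ through the endpoints of the rank-$m$ intervals is continuous, and the oscillation bound gives $\|\T_{p,q}^{k}-\psi_m\|_\infty\le C'\,m^{k}\rho^{\,m}\to0$. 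A uniform limit of continuous functions is continuous, which closes the induction. This also recovers the representation $\T_{p,q}^{k}=\big(\partial_{q_2}^{k}F_{\mu_{q_2}}|_{q_2=q}\big)\circ F_{\mu_q}^{-1}$, consistent with Theorem \ref{Thm:TakagiOnly} for $k=1$.

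The main obstacle is that continuity in $x$ is invisible at the level of \eqref{eq:explicit_Sq1q2}, because the digit map $x\mapsto(\omega_j)$ is discontinuous at every stationary point (a carry alters infinitely many digits); the role of the differentiated self-affinity is precisely to replace this by a geometric oscillation estimate, and the induction on $k$ is forced on us because the Leibniz expansion of $\partial_{q_2}^{k}(r_{q_2}S^{p}_{q_1,q_2})$ couples order $k$ to all lower orders. The only other point requiring care—justifying termwise differentiation of the defining series uniformly in $x$ and locally uniformly in $q_2$—is routine, since the geometric decay $\lambda_j\le\rho^{\,j}$ dominates the polynomial growth $O(j^{k})$ coming from the logarithmic-derivative factors.
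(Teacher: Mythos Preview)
Your proof is correct, but it takes a more structural and elaborate route than the paper. The paper argues directly from the series: two points $x,y$ lying in the same rank-$m$ $q$-$r$-adic interval share their first $m$ digits, so the first $m$ terms of the defining series coincide and $|\T_{p,q}^{k}(x)-\T_{p,q}^{k}(y)|$ is bounded by the tail $\sum_{j\ge m}|\partial_q^{k}(q^jb^{\tilde s_j+i})|$, which is estimated by $P_k(m)\,p_{\max}^{\,m-k-1}$ via the same logarithmic-derivative bound you use in your first step; continuity follows by the triangle inequality through the left endpoint. In effect the paper stops after your Step~1, reading off the modulus of continuity straight from the uniform tail bound. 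Your additional machinery---differentiating the self-affinity \eqref{eq:selfSimularity_Sq1q2}, the induction on $k$, and the polygonal-approximation argument---is not needed for the bare continuity statement (boundedness of the lower-order terms, which is all your oscillation estimate actually uses, already follows from Step~1 without induction). What your approach buys is an explicit recursive description of $\T_{p,q}^{k}$ on each rank-$m$ interval in terms of $\T_{p,q}^{0},\dots,\T_{p,q}^{k}$ on $[0,1]$, which makes the self-affine structure of these curves transparent and would be the natural starting point if one wanted finer information such as H\"older exponents; the paper's argument is shorter but gives only the crude modulus $C\,p_{\max}^{\,\varepsilon m}$.
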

\begin{proof}

The proof is based on the fact that  any two points $x$ and $y$ from the same  $q$-$r$-adic interval of rank $m$ have the same coordinates $(\omega_1, \omega_2,\dots, \omega_m)$ in $q$-$r$-adic expansion. This provides a straightforward estimate for the difference
$|\T_{p,q}^{1}(x) - \T_{p,q}^{1}(y)|$.

Let $b=b_q$ denote  the ratio $t_q/q$. As shown in Section $3.3$ above any  $x$ in $(0,1)$ can be coded by a  path $\omega=(\omega_i)_{i=1}^\infty, \omega_i\in\{0,1\dots,r-1\}=\mathcal{A},$  in $r$-adic (perfectly balanced) tree $\mathcal{M}_{r}$. The function
$\T_{p,q}^{1}$ maps $x=x_q\in[0,1]$ with $q$-$r$ adic series representation
 \[x=\sum\limits_{j=1}^\infty\Big(\sum\limits_{i=0}^{\omega_j-1}b^i\Big)q^jb^{\bar{a}_1\cdot\bar{s}_j+2\bar{a}_2\cdot\bar{s}_j+\dots+d\,\bar{a}_d\cdot\bar{s}_j},\]
 to $z=\frac{\partial}{\partial q} x_q. $
Let $\tilde{s}_j$ denote the sum $\bar{a}_1\cdot\bar{s}_j+2\bar{a}_2\cdot\bar{s}_j+\dots+d\,\bar{a}_d\cdot\bar{s}_j$.

Derivative $\frac{\partial}{\partial q}(q^jb^l)$ equals to $q^{j-1}b^{l-1}[(j-l)b+lt'_q],$
where $l =\tilde{s}_j-\omega_j+i.$
Using implicit function theorem we find that
\begin{equation}\label{eq:IFT}
t'_q = -\frac{a_0d q^{d-1}+a_1(d-1)q^{d-2}t_q+\dots+a_{d-1}t_q^{d-1}-(d-1)q^{d-2}}{a_1 q^{d-1}+2a_2q^{d-2}t_q+\dots a_dt_q^{d-1}d}.\end{equation}
Let also $\AAA$ denote the maximum of the coefficients  $\{a_0,\dots,a_d\}$ of the polynomial $p(x)$.
We have  $|t'_q|\leq \AAA\frac{2d^2}{q}.$ Let $p_{max}\in(0,1)$ denote the maximum of $ \{q,t_q,\frac{t_q^2}{q},\dots,\frac{t_q^d}{q^{d-1}}\}.$

Assume  $y$ is  the left  boundary of some  $q$-$r$-adic interval of rank $m,$ containing point $x$. Then the following inequality holds (we simply  write $T$ for $T_{p,q}^{1}$):
\[|T(y)-T(x)| \leq \sum\limits_{j=m}^\infty\sum\limits_{i=0}^{\omega_j-1}\big|\frac{\partial}{\partial q} \big( q^jb^{\tilde{s}_j+i} \big)\big|. \]

Using estimate $ | q^jb^{\tilde{s}_j+i} |\leq (p_{max})^j$, $0\leq i\leq r-1$, we see that the absolute value of $\frac{\partial}{\partial q}(q^jb^l)$ for $j>2$ is estimated by expression $P(j,q)(p_{max})^{j-2}$, where $P(j,q)$  is some polynomial. Define $\varepsilon$ to be equal to $0.99.$ Then for  $m$ large enough it holds:

\begin{equation}|T(y)-T(x)|\leq \sum\limits_{j=m}^\infty\sum\limits_{i=0}^{\omega_j-1}P(j,q)(p_{max})^{j-2} \leq C(p_{max})^{m\varepsilon},
\label{eq:increamentEstimate}\end{equation}
where $C$ is some constant.

In general case we can assume that points $x$ and $x+\delta$ are from some $q$-$r$-adic interval of rank $m=m(\delta), \lim\limits_{\delta\rightarrow 0}m(\delta)=+\infty, $ and let  $y$ be the  left boundary point of this interval. Then
\[|T(x+\delta)-T(x)| \leq |T(y)-T(x)|+|T(y)-T(x+\delta)|\leq 2Cp_{max}^{m\varepsilon}\]
For $k>1$ we  can use  a similar argument based on the following  estimate for the $k$-th derivative: $|\frac{\partial^k}{\partial q^k}q^jb^l| \leq P_k(j,q)(p_{max})^{j-k-1},$ where $j>k$ and $P_k(j,q)$ is some polynomial.
\end{proof}

\begin{proposition} 
For a cylindrical function $g=- \sum_{j=0}^{d}ja_j\mathbbm{1}_{\{k^1(x_1)=j\}}\in\mathscr{F}_1$ and  for  $\mu_{{q}}$-a.e. $x$ there is a stabilizing sequence $l_n(x)$ such that  the limiting function is $ \T_{p,q}^{1}$.
\end{proposition}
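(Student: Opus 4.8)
The plan is to secure existence of a limiting curve, then pin down its shape by evaluating it on the dense set of $q$-$r$-stationary points $G_q$ and matching with $\T_{p,q}^{1}$. For existence I would apply Theorem \ref{Th:ExistanceOfLimitShapeSLP} (equivalently Theorem \ref{Th:SLP-LimitingCurveExistance}), which needs $g$ not cohomologous to a constant. On a rank-$1$ cylinder whose first edge has increment $s$ one has $k^1(x_1)=d-s$, so $g=-(d-s)a_{d-s}$ and the coefficients of \eqref{eq:sumOfCylFunction} are $h^g_{1,l}=F^g_{1,l}(a_l)=-a_l(d-l)a_{d-l}$. These weights are non-degenerate, so $g$ is not of the form $c+(h\circ T-h)$; by Theorem \ref{Th:RBoundnessIsEquiToCohomolToConst} the normalizing sequence $R_n$ is unbounded and a continuous limiting curve exists for $\mu_q$-a.e.\ $x$ along a stabilizing sequence (non-cohomology also follows a posteriori once $R_n\to\infty$ is established below).

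Next I would invoke Lemma \ref{Lm:triangArray}: the limiting curve is totally determined by the limits
\[
\lim_{n\to\infty}\frac{1}{R_n}\Big(F^g_{n,k}(L_{m,i,n,k})-\frac{L_{m,i,n,k}}{H_{n,k}}F^g_{n,k}(H_{n,k})\Big),
\]
taken over the nodes $L_{m,i,n,k}$, whose normalized positions $L_{m,i,n,k}/H_{n,k}$ converge, by the computation of Section~3.3, to the $q$-$r$-stationary points $G_q\subset[0,1]$. Since $\T_{p,q}^{1}$ is continuous by Theorem \ref{TpqIsCont} and vanishes at $0$ and $1$ (because $S^p_{q,q_2}$ fixes the endpoints for every $q_2$, so its $q_2$-derivative does too), and since $G_q$ is dense, it suffices to prove that each such limit equals $\T_{p,q}^{1}$ at the corresponding stationary point.

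The computational heart is to evaluate $F^g_{n,k}$ at the nodes through \eqref{eq:sumOfCylFunction} with the weights $h^g_{1,l}$, and to recognize the normalized result as a finite-level $q$-derivative of the expansion \eqref{eq:generalizednadic(q)}. The engine is the ratio asymptotics $C_p(n-m,k-\Delta)/C_p(n,k)\to q^{m}b^{\Delta}$, with $b=t_q/q$, which follows from the asymptotic unimodality and the bounds of Lemma \ref{Lm:genBinEstimates} together with the normalization $q\,p(b)=1$ forced by Theorem \ref{Th:SLPmeasures}. Under these asymptotics a block of $a_s$ paths branching with increment $s$ contributes a length $q b^{s}$ and an increment $\frac{\partial}{\partial q}(q b^{s})=q b^{s}\bigl(\tfrac1q+s\tfrac{b'}{b}\bigr)$ to $\T_{p,q}^{1}$; the role of the weights $ja_j$ in $g$ is precisely that the normalized combinatorial increment of $F^g_{n,k}$ across the same block reproduces this derivative, with the differentiated Vandermonde identity \eqref{eq:GenBinIdentity} serving as the finite analogue of the matching. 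I would then make the identification structural: both functions obey the same hierarchical self-affine rule, for the limiting curve the recursion $y_{m-1,i}=\sum_{j=0}^{d}a_j y_{m,i-j}$ from the proof of Lemma \ref{Lm:triangArray}, and for $\T_{p,q}^{1}$ the relation obtained by differentiating the self-affinity \eqref{eq:selfSimularity_Sq1q2} of $S^p_{q_1,q_2}$ in $q_2$ at $q_2=q$, namely $\T_{p,q}^{1}(x_0+r_qx)=\T_{p,q}^{1}(x_0)+r_q'\,x+r_q\,\T_{p,q}^{1}(x)$. Matching the base-scale data ($m=1$) then forces the two continuous functions to coincide on $G_q$, hence on $[0,1]$.

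The main obstacle is exactly this base-scale identity: showing that the normalized increment of $F^g_{n,k}$ over a first-level block equals $\frac{\partial}{\partial q}(q b^{s})$ up to the common scale $R_n$. This requires controlling ratios of generalized binomial coefficients uniformly near the typical vertex $(n,k_n)$, where the failure of unimodality of $C_p(n,\cdot)$ (handled only asymptotically by Lemma \ref{Lm:genBinEstimates}) makes the estimates delicate. Once the block increments are identified with $\frac{\partial}{\partial q}(q b^{s})$, the self-affine rule propagates the equality to every scale, $R_n$ is seen to grow like the $q$-derivative of the relevant $q$-$r$-adic length (so $R_n\to\infty$, confirming non-cohomology), and continuity together with density of $G_q$ yields $\varphi_x^g=\T_{p,q}^{1}$ for $\mu_q$-a.e.\ $x$.
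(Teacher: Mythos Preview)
Your approach is essentially the paper's: secure existence via Theorem~\ref{Th:ExistanceOfLimitShapeSLP}, reduce to the stationary nodes by Lemma~\ref{Lm:triangArray}, identify the values there through the differentiated Vandermonde identity~\eqref{eq:GenBinIdentity}, and conclude by continuity of $\T_{p,q}^{1}$ (Theorem~\ref{TpqIsCont}) together with density of $G_q$. The only difference is presentational: the paper carries out a concrete computation in the special case $p(x)=1+x+x^2$ (where $a_j\equiv1$, so $g=-k^1(x_1)$ and one gets the clean formula $h^g_{n,k}=\tfrac{k}{n}H_{n,k}$ up to an irrelevant additive constant), explicitly evaluating the limits of Lemma~\ref{Lm:triangArray} at the points $q^i$ and $q^{i-1}(q+t_q)$ with the specific choice $R_n=\tfrac{qH_{n,k}}{n}(2-\mathbb{E}_{\mu_q}k_1)$, whereas you frame the same verification structurally through the self-affinity relation obtained by differentiating~\eqref{eq:selfSimularity_Sq1q2}; the underlying calculation is the same.
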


\begin{proof}

For simplicity we will present the proof for $p(x)=1+x+x^2.$
The general case  follows the same steps. Theorem \ref{Th:ExistanceOfLimitShapeSLP} implies that we can find the limiting function $\varphi(x)$ as $\lim\limits_{n\rightarrow\infty} \varphi_{n,k}$, where (by the law of large numbers)  $\frac{k_n}{n}\rightarrow \mathbb{E}_{\mu_q}k_1$. Lemma \ref{Lm:triangArray} imply that it is sufficient to show that the function $\varphi(x)$ coincide with $\T_{p,q}^{1}$ at $x=q^j$ and $x=q^{j-1}(q+t_q)$, where $j\in\mathbb{N}$.

The function $\T_{p,q_1}^{1}$ maps point $x =q^j$ to  $\frac{\partial }{\partial q}q^j =jq^{j-1}$  and point $x=q^{j-1}(q+t_q)$ to $q^{j-2}\big(jq+(j-1)t_q+t'_qq\big)$.
Using expression \eqref{eq:IFT} we see that
 $t'_q =\frac{1-(2q+t_q)}{2t_q+q}.$

Identity \eqref{eq:GenBinIdentity} implies that  $h_{n,k}^g=  \frac{k}{n}H_{n,k}.$ We need to find the following limits for  $i\in\mathbb{N},$ $n\rightarrow \infty$ and $\frac{k_n}{n}\rightarrow \mathbb{E}_{\mu_q}k_1 = 2q+t_q$ (we write $F$ for $F_{n,k}$):

\begin{enumerate}
\item $\lim\frac{1}{R_n}\Big(F(H_{n-i,k-2i}) - \frac{H_{n-i,k-2i}}{H_{n,k}}F(H_{n,k})\Big)$
\item $\lim\frac{1}{R_n}\Big(F(H_{n-i,k-2i}+H_{n-i,k-2i+1}) - \frac{H_{n-i,k-2i}+H_{n-i,k-2i+1}}{H_{n,k}}F(H_{n,k})\Big)$
\end{enumerate}
We define the normalizing coefficient $R_n$ by
$R_{n}=\frac{qH_{n,k}}{n}(2-\mathbb{E}_{\mu_q}k_1)$. After some computations we see that the first limit equals $iq^{i-1},$ and the second to $q^{i-2}\big(iq+(i-1)t_q+t'_qq\big)$. These shows that that the limiting function $\varphi$ coincides with the function $\T_{p,q_1}^{1}$ on a dense set of $q$-$2$-stationary points.  Therefore, by Theorem \ref{TpqIsCont} these functions coincide.

\end{proof}

Numerical simulations show that limiting functions  $\T_{p,q}^{k}, k\geqslant1,$  and their linear combinations arise as limiting functions
 $\lim\limits_{n\rightarrow \infty}\varphi_{n,k_n}^g$ for a general cylindrical function $g\in\mathcal{F}_N$.
We do not have any proof of this statement except for the case of the Pascal adic, see Theorem \ref{Thm:TakagiOnly} above. Expression \eqref{eq:Numformula} shows  that for a cylindrical function  $g\in\mathscr{F}_N$ the partial sum $F^g_{n,k}$  is defined by the coefficients $h_{N,k}^g, 0\leq k\leq Nd.$ Its seems to be useful to define  $h_{N,k}^{g_m}$ by the generating function $h_{N,k}^{g_m}=\text{coeff}[v^m]\, (h_0+h_1v+\dots+h_{d}v^{d})^{k}p(v)^{Nd-k}, $ where functions $g_m$ forms an orthogonal basis. (For the Pascal adic the function $(1-av)^{k}(1+v)^{n-k}, a = \frac{1-q}{q},$ is the generating function of the Krawtchouk polynomials and the basis $g_m$ is the basis of Walsh functions, see \cite{LodMin15}).

\section{Limit of limiting curves}

In this section we answer the question by  \'E. Janvresse, T. de la Rue and Y.~Velenik from \cite{DeLaRue}, page 20, Section 4.3.1.

 \begin{figure}[h!]
                \centering
                \includegraphics[scale=0.27]{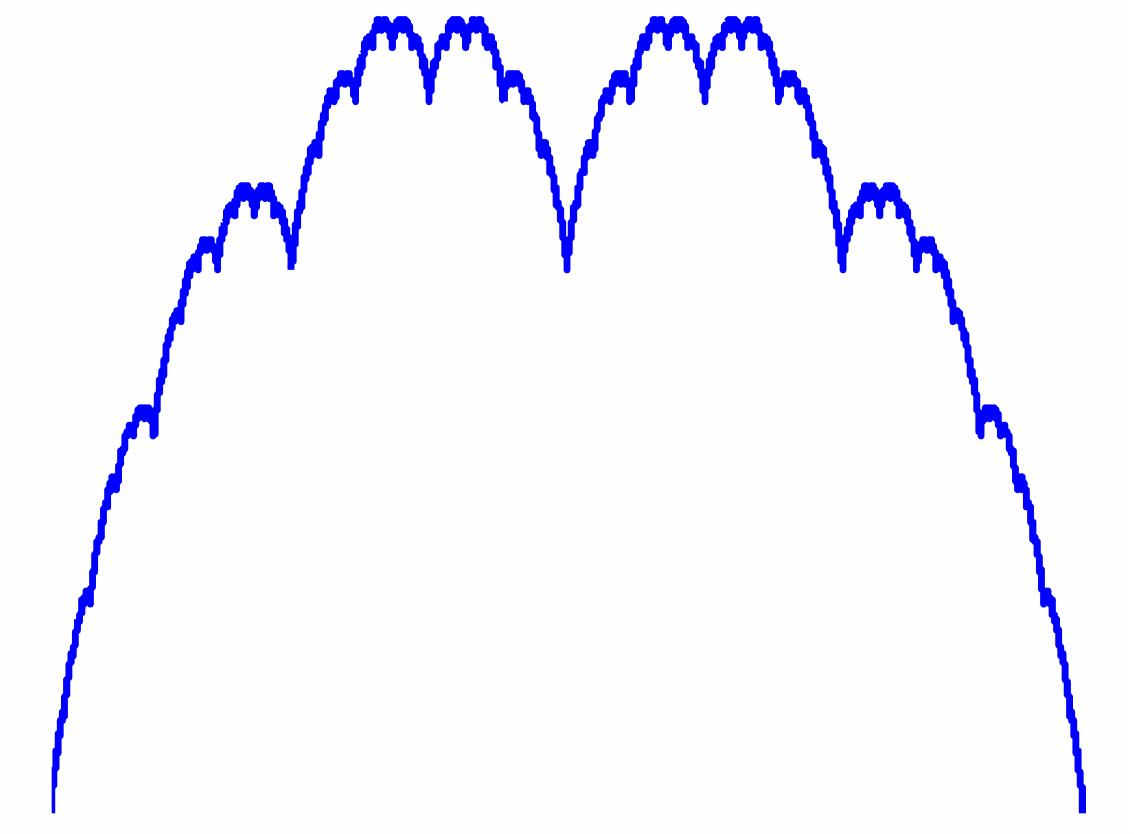}
                \includegraphics[scale=0.27]{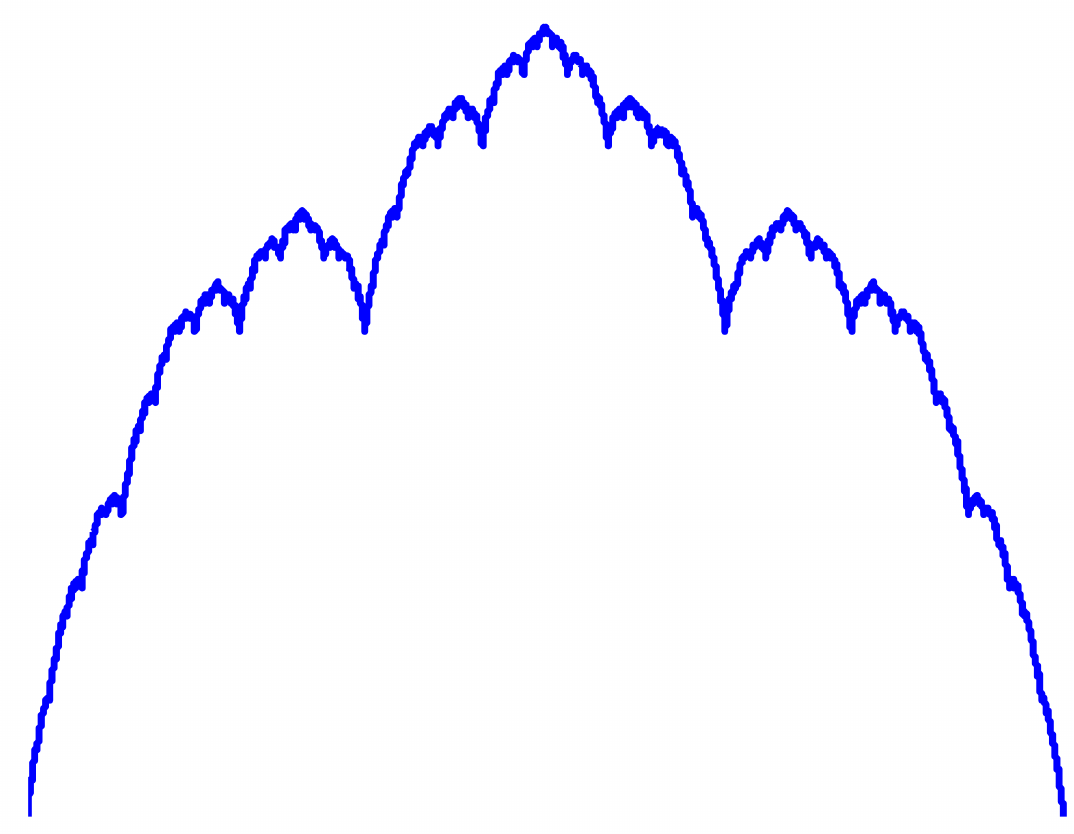}
                 \includegraphics[scale=0.27]{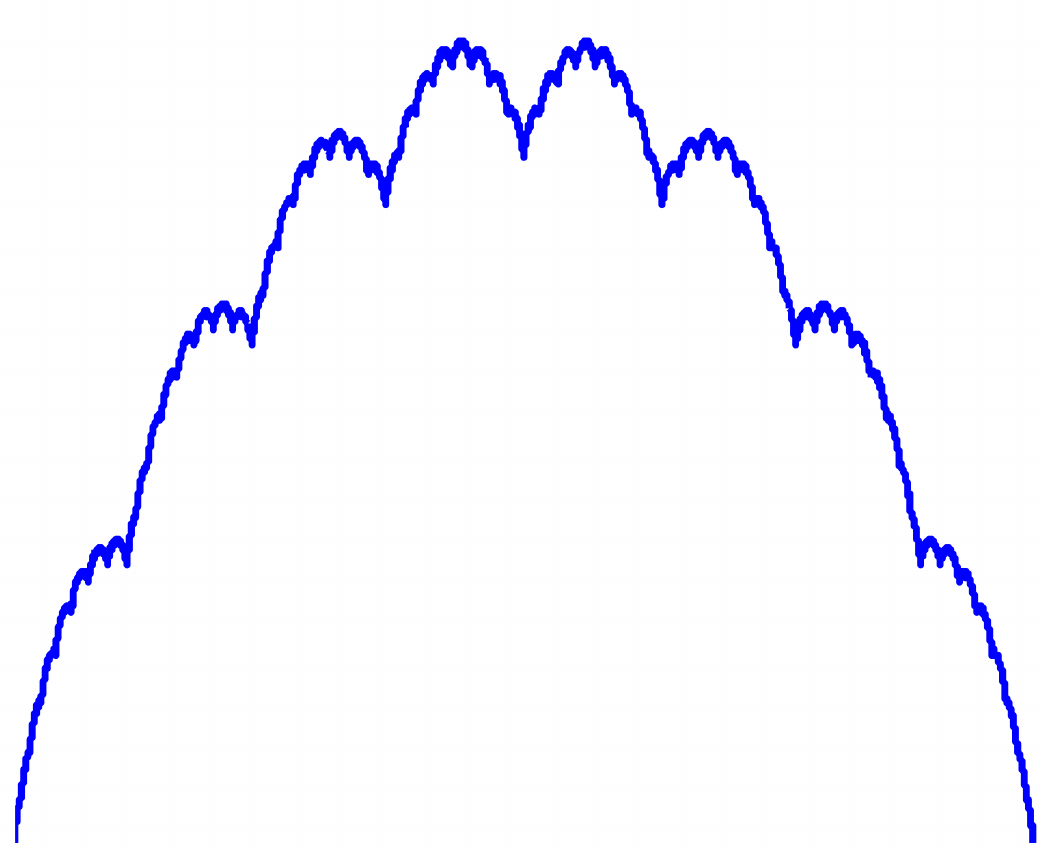}
                 \includegraphics[scale=0.27]{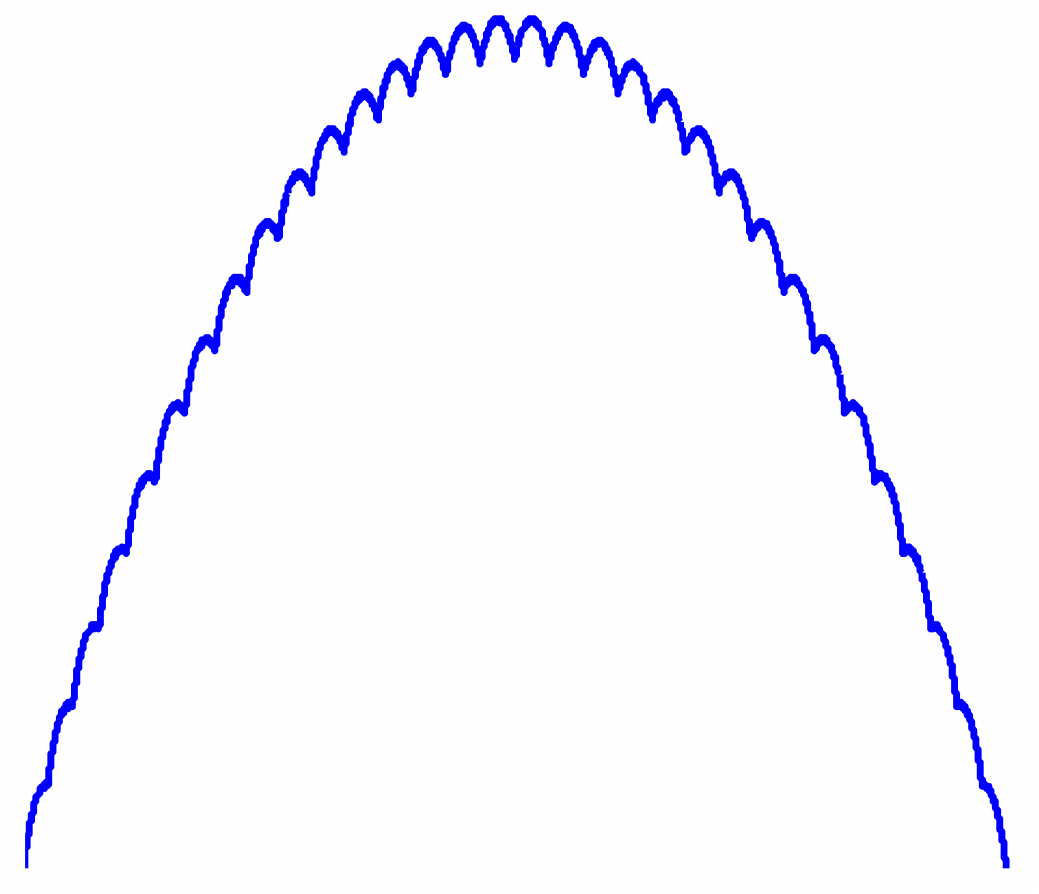}
                \caption{Limiting curves observed for the polynomial adic transformations (from left to right): $d+1=2,3,8,32.$ }
                \label{Fig:somecurves}
\end{figure}

Let  $q\in (0,1)$ and $t_q\in(0,1)$ be the unique solution in $(0,1)$ of the equation
\[q^d+q^{d-1}t+\dots+t^d=q^{d-1}.\]

As above, we denote by $b=b_q$ the ratio $t_q/q$. Any  $x$ in $(0,1)$ has an almost unique $(d+1)$-adic representation:
\begin{equation}x = \sum\limits_{j=1}^\infty\Big(\sum\limits_{i=0}^{\omega_j-1}b^i\Big)q^jb^{s_j^1+2s_j^2+\dots ds_j^d-\omega_j},\label{eq:symmetricNadic}\end{equation}
where $\omega=(\omega_i)_{i=1}^\infty, \omega_i\in\{0,1\dots,d\}=\mathcal{A},$ is a path in $(d+1)$-adic (perfectly balanced) tree $\mathcal{M}_{d+1}$ and $s_j^k$ is the number of occurrences of letter $k$ among $(\omega_1,\omega_2, \dots, \omega_j).$

We denote by $S_q(x)$ the  (anaclitic in parameter $q$) function defined by (a uniformly summable in $x$) series \eqref{eq:symmetricNadic}. We put $q_*$ equal to $1/(d+1)$  (this is so called symmetric case $t_{q_*}=q_*$). If $d=1$  representation \eqref{eq:symmetricNadic} for  $q^*=1/2$ is a usual dyadic representation of $x\in(0,1)$.

The authors of  \cite{DeLaRue} were interested in the limiting behavior of the graph of the function   \[\tilde{\T}_{d}: x \mapsto \frac{1}{d+1}\frac{\partial S_{q}}{\partial q}\Big|_{q=q_*}\] for large values of  $d$ (we also introduced vertical normalization by $d+1$, if  $d=1$ the graph of $\frac12T_{1}$ is the Takagi curve). On the basis of a series of numerical simulations they noticed that limiting curves for $d\rightarrow\infty$ seem to converge to a smooth curve. Below we will  show that the limiting curve for $d=\infty$ is actually a parabola, see Fig. \ref{Fig:somecurves}.

We are going split the unit interval into $d+1$   subintervals $I_i=(\frac{i}{d+1};\frac{i+1}{d+1}),0\leq i\leq d$, of equal length and evaluate the function  $\tilde{\T}_{d}$ at each of the (left) boundary points of these intervals. 
We also want to show that the function $\tilde{\T}_{d}$ is uniformly in $d$ bounded at these intervals. After that we  go to the limit in~$d$.

%

Symmetry assumption  $q = t_q = q_*$ and implicit function theorem (see \eqref{eq:IFT}) imply that
$t'_{q_*} = - \frac{2-d}{d}.$
In its turn this implies $b'_q = \frac{t'}{q}-\frac{b_q}{q}\Big|_{q=q_*} = -\frac{2(d+1)}{d}.$
Finally we find that $\frac{\partial }{\partial q} (q^jb^r)  = jq^{j-1}b^r+rb^{r-1}q^jb'_q \Big|_{q=q_*} =(q_*)^{j-1}(j+rq_*(-\frac{2(d+1)}{d}))= (q_*)^{j-1}(j - \frac{2r}{d}))$.


Note that the left boundary point $a_d$ of $I_{ad}, a\in[0,1],ad \equiv [ad], $ ($[\,\cdot\, ]$ is an integer part) equals  $a_d=\frac{ad}{d+1}$ and is coded by the stationary path  $\omega = (\omega_j)_{j=1}^\infty\in \mathcal{M}_{d+1}$ with $ \omega_1 = ad$ and $ \omega_j \equiv 0, j\geq2.$

We have
\[\tilde{\T}_{d}(a_d) = \frac{1}{d+1}\sum\limits_{i=0}^{da-1}\Big(j-\frac{2(da(j-1)+i)}{d}\Big) =a_d(1-a_d)\frac{d+1}{d} \xrightarrow[d\rightarrow\infty]{} a_d(1-a_d). \]

This shows that the smooth curve (if exists) should be a parabola.

To complete the proof of the theorem it only remains to show that $\tilde{\T}_{d}(x)$ is uniformly bounded in $d$ at the intervals $I_{ad}$. Analogously to \eqref{eq:increamentEstimate} we see that for $x\in I_{ad}$ it holds

$|\tilde{\T}_{d}(x)-\tilde{\T}_{d}(a_d)| = \frac{1}{d+1}\sum\limits_{j=2}^\infty q_*^{j-1}\sum\limits_{i=0}^{\omega_j-1}\Big(j-\frac{2(s_j^1+2s_j^2+\dots + ds_j^d-\omega_j+i)}{d}\Big)\leq \frac{100 d}{d+1} \sum\limits_{j=2}^\infty j q_*^{j-1} \leq \\ \leq\frac{400d}{(d+1)^2}.$ 
That finishes our proof.


\subsection{Question.}

We may heuristically interpret results of Section $4$ as existence of a limiting curve of a dynamical system defined by a diagram  with "infinite"\, number of edges.
This leads us to the following questions: Does this system really exist? How to define it correctly? Which properties does it have?







\newpage

\end{document}